\crefname{assumption}{assumption}{assumptions}
\crefname{thm}{theorem}{theorems}
\crefname{lem}{lemma}{lemmas}
\crefname{cor}{corollary}{corollaries}
\crefname{prop}{proposition}{propositions}
\Crefname{theorem}{Theorem}{Theorems}
\newcommand{\E}{\mathbb{E}}
\newcommand{\PP}{\mathbb{P}}
\newcommand{\R}{\mathbb{R}}
\newcommand{\T}{\mathbb{T}}
\newcommand{\Z}{\mathbb{Z}}
\newcommand{\1}{\mathds{1}}
\let\phi\varphi
\newtheorem{thm}{Theorem}[section]
\newtheorem{prop}[thm]{Proposition}
\newtheorem{lem}[thm]{Lemma}
\newtheorem{cor}[thm]{Corollary}
\newtheorem{rem}[thm]{Remark}
\begin{document}

\title{Propagation of solutions to the Fisher-KPP equation with slowly decaying initial data}
\author{Christopher Henderson
\thanks {Department of Mathematics, Stanford University, Stanford, CA 94305, (chris@math.stanford.edu).}}
\maketitle

\begin{abstract}
The Fisher-KPP equation is a model for population dynamics that has generated a huge amount of interest since its introduction in 1937.  The speed with which a population spreads has been computed quite precisely when the initial data decays exponentially.  More recently, though, the case when the initial data decays more slowly has been studied.  Building on the results of Hamel and Roques '10, in this paper we improve their precision for a broader class of initial data and for a broader class of equations.  In particular, our approach yields the explicit highest order term in the location of the level sets.  In addition, we characterize the profile of the inhomogeneous problem for large times.
\end{abstract}

\section{Introduction}

In this article, we study the long time behavior of solutions to the Fisher-KPP equation
\begin{equation}\begin{cases}\label{e_kpp}
u_t = u_{xx} + f(x,u), ~~~t > 0, x\in \R,\\
u(0,x) = u_0(x), ~~~x\in \R,
\end{cases}\end{equation}
where $f$ is an $L$-periodic function in $x$, for some $L>0$, which
vanishes when $u$ is $0$ and $1$ and where $u_0$ is a function taking
values between 0 and 1 that decays slowly as $x$ tends to infinity.
This equation comes from adding a diffusive term to the logistic
equation, and it was originally introduced, with $f(x,u) = u(1-u)$, as
model for population dynamics in the first half of the twentieth
century~\cite{Fisher,KPP}.  In this interpretation, $u$ stands for the
local population density normalized by the carrying capacity, while
$f$ accounts for the effect of an inhomogeneous environment on the
growth of this population up to the carrying capacity.

Fisher and Kolmogorov, Petrovskii, and Piskunov showed that if the
initial data is localized then the population would spread at a
constant rate in the form of a traveling wave.  This has been made
more precise and succinct over time~\cite{Uchiyama, Lau, Bramson78,
  Bramson83, Gartner, HNRR12, HNRR13, HendersonVS, Roberts}.  In these works, the
highest order terms in the spreading rate are obtained by studying the
linearized problem.  When $u_0$ is not
localized, but decays exponentially, like $e^{-\lambda x}$, the behavior is 
similar.  Here,
the population converges to a traveling wave, the speed of which is determined by the order of the exponential decay and is
given again by the linearized problem~\cite{Uchiyama}.

Recently, the case when $u_0$ decays slower than any exponential and
$f$ is homogeneous was investigated by Hamel and
Roques~\cite{HamelRoques}.  They show that $u$ becomes asymptotically
flat in time, which we will state precisely later.   This implies that a traveling wave cannot give the limiting
behavior of $u$.  As
a result, they investigate the location of the level sets of $u$ and
show that, for each $m\in (0,1)$, the level sets satisfy
\begin{equation}\label{e_const}
	\{x\in \R: u(t,x) = m\} \subset \left[ u_0^{-1}\left(C_m e^{-f'(0)t}\right),u_0^{-1}\left(c_m e^{-f'(0)t}\right)\right],
\end{equation}
where $c_m < C_m$ are constants depending on $m$.  These
results hold for only a small class of initial data; for more general
data, they show that level set is bounded above and below by $u_0^{-1}\left( \exp -(f'(0)\pm \epsilon)t\right)$ for any $\epsilon>0$.  The main results of this paper are twofold:  firstly, in the homogeneous setting, we explicitly characterize the constant in~\eqref{e_const} for a broad class of the admissible initial data; secondly, we obtain equally precise results in the previously unstudied inhomogeneous setting.  In fact, in the inhomogeneous setting, we show that $u$ grows at each point according to the solution to a generalized logistic equation.  This provides an analogue to the asymptotic flatness result of Hamel and Roques in the inhomogeneous setting.

\section{Main results}\label{s_results}

We now state our results more precisely.  First we assume that there
is a $\delta>0$ such that $f(x,u)$ is a positive, $C^{1,\delta}$ function with respect to $u$ that satisfies
\begin{equation}\label{e_f}
\begin{cases}
	f(x,u) \leq f_u(x,0) u \text{ for all } u \in [0,1],\\
	\frac{f(x,u)}{u} \text{ is decreasing in } u,\text{ and}\\
	f_u(x,0) > 0 \text{ and}\\
	f_u(x,1) < 0
\end{cases}
\end{equation}
We note that some of the assumptions on the signs of $f$ and its
derivatives can be relaxed, but we use them here to simplify the
arguments.

We assume that $u_0$ is a positive function that decays to 
zero slower than any exponential.  
By this we mean that, for any $\epsilon \in (0,\infty)$
\begin{equation}\label{e_superexponential}
	\lim_{x\to\infty} u_0(x) = 0,
	~~\text{ and } ~~
	\lim_{x\to\infty} u_0(x) e^{\epsilon x} = \infty.
\end{equation}
For instance, functions which decay algebraically as $x$ tends to infinity satisfy the conditions in~\eqref{e_superexponential}.
Though our technique
can handle any functions of this type, the presentation is much
simpler if we restrict to initial data, $u_0$, which are eventually
monotonic and which decay slower than $e^{-\beta \sqrt{x}}$ for any $\beta$.  Namely, we assume that there is a point $x_0>0$ such that
\begin{align}\label{e_monotonic}
	&u_0'(x) < 0 \text{ for all } x\geq x_0,
	~~\text{ and }\\
	\label{e_regular}
	&\lim_{x\to\infty} \frac{u_0'(x)}{u_0(x)/\log(u_0(x))} = 0.
\end{align}
We will comment on the effect of this assumption later.  In addition, since we are studying the effect of a slowly decaying tail at $\infty$, we additionally assume that $u_0$ is front-like.  In other words, we assume that
\begin{equation}\label{e_left_limit}
	\liminf_{x\to-\infty} u_0(x) > 0.
\end{equation}
Again, we note that the positivity assumption on $u_0$ can be relaxed, but we use it here to simplify the presentation.

Classical results address the case when $u_0 \sim e^{-\lambda
  x}$~\cite{Fisher,KPP,Bramson78,Bramson83, Uchiyama,Lau}.  If
$\lambda > 1$ then $u$ converges to a traveling wave with speed $2$ in
the correct moving frame, while if $\lambda < 1$, $u$ converges to a
traveling wave with speed $\lambda + 1/\lambda$ in the moving frame.
In short, this means that the level sets of $u_0$ move with a constant
speed, which increases to infinity as $\lambda$ decreases to zero, and
that any two level sets travel at the same speed and remain a bounded
distance from one another for all time.  Hamel and Roques point out in
\cite{HamelRoques} that, using these as a family of sub-solutions, we
can see immediately that the level sets of any solution with initial
data like \cref{e_superexponential} will spread to the right super
linearly in time.  We now calculate the speed of these level sets
explicitly.

\subsection{The homogeneous case}

Our first result concerns the spreading of the level sets in the
homogeneous setting.
\begin{thm}\label{p_homogeneous}
  Suppose that $u$ solves \cref{e_kpp} with $f$ depending only on $u$
  and not on the space variable, $x$.
  Suppose further that $f$ satisfies condition \eqref{e_f} and that $u_0$ satisfies
  conditions~\eqref{e_superexponential} and \eqref{e_left_limit}.  In
  addition, let $u_0$ satisfy conditions \eqref{e_monotonic} and \eqref{e_regular}.  Let $\phi$ be the
  unique (up to translation) global in time solution to
\[
	\phi_t = f(\phi).
\]
Then, for each $m \in (0,1)$, define $T_m \in \R$ to be the unique
time so that $\phi(T_m) = m$.  Then, for any $r>0$, we have
\begin{equation}\label{e_limsup}
	\limsup_{T\to\infty} \sup_{x \geq u_0^{-1}\left(\phi(T_m - T)\right) - rT} u(T,x) \leq m.
\end{equation}
and
\begin{equation}\label{e_liminf}
	\liminf_{T\to\infty} \inf_{x \leq u_0^{-1}\left(\phi(T_m - T)\right) + rT}  u(T,x) \geq m.
\end{equation}
\end{thm}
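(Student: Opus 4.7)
The heuristic behind the theorem is that for $x$ far to the right, assumption \eqref{e_regular} forces $u_0$ to vary slowly, so the diffusion $u_{xx}$ becomes negligible compared to the reaction $f(u)$ and each point evolves approximately according to the ODE $\phi_t = f(\phi)$ with initial value $u_0(x)$. This predicts $u(T, x) \approx \phi\bigl(T + \phi^{-1}(u_0(x))\bigr)$, and setting this equal to $m = \phi(T_m)$ yields $u_0(x) = \phi(T_m - T)$, i.e., $x = X(T) := u_0^{-1}(\phi(T_m - T))$. The theorem says this ODE prediction is sharp up to an $o(T)$ spatial error, which is negligible on the exponential-in-$T$ scale of $X(T)$.

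The first step would be a slow-variation lemma derived from \eqref{e_regular}: for any fixed $r > 0$,
\[
\sup_{|s| \le rT} \Big|\log\frac{u_0(X(T)+s)}{u_0(X(T))}\Big| \to 0 \quad \text{as } T \to \infty.
\]
Since $u_0(X(T)) = \phi(T_m - T) \sim A e^{-f'(0)T}$ for some $A > 0$, we have $|\log u_0(X(T))| \sim f'(0) T$, and \eqref{e_regular} gives $|u_0'/u_0| = o(1/|\log u_0|) = o(1/T)$ in a neighborhood of $X(T)$. Integrating over an interval of length $rT$ yields $o(1)$. Combined with continuity of the ODE flow, this converts the spatial tolerance $rT$ into an $o(1)$ tolerance on $\phi(T_m + o(1))$, which tends to $m$.

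For the lower bound \eqref{e_liminf} I would use the ODE-based candidate sub-solution $\underline u(t, x) := \phi\bigl(t + \sigma(x)\bigr)$ with $\sigma(x) := \phi^{-1}(u_0(x))$, which matches $u_0$ at $t=0$ and satisfies
\[
\underline u_t - \underline u_{xx} - f(\underline u) = -f(\underline u)\bigl[f'(\underline u)\sigma'(x)^2 + \sigma''(x)\bigr].
\]
In the tail one computes $\sigma''(x) \approx (\log u_0(x))''/f'(0) \geq 0$ for typical slowly-decaying $u_0$, and since $f'(\underline u) > 0$ for small $\underline u$, the bracket is nonnegative and $\underline u$ is a genuine sub-solution there. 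Comparison then yields $u \geq \underline u$, and monotonicity of $u$ in $x$ (inherited from $u_0$ via \eqref{e_monotonic}) together with the slow-variation lemma gives $\inf_{x \leq X(T)+rT} u(T, x) \geq \underline u(T, X(T)+rT) = \phi(T_m + o(1)) \to m$. For the upper bound \eqref{e_limsup}, I would symmetrically use a perturbed super-solution $\bar u(t, x) := \phi\bigl(t + \sigma(x) + h(t)\bigr)$ with $h(0) > 0$ small and $h'(t) \geq \sup_x[f'(\bar u)\sigma'(x)^2 + \sigma''(x)]$ over the relevant range, which makes $\bar u$ a super-solution and yields the matching upper bound provided $h(T) \to 0$.

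The principal obstacle will be controlling the correction $h(T)$, whose size is governed by the integrated residual $\int_0^T \sup_x[f'(\bar u)\sigma'(x)^2 + \sigma''(x)]\, dt$ over the tail. For a typical algebraic $u_0$ this residual is $O(1/x^2)$, exponentially small at $x = X(T)$ and easily beating the factor $T$ from integration, but extracting such quantitative decay of $\sigma''$ from only the first-order regularity \eqref{e_regular} is delicate and may require a mild smoothing of $u_0$ or an auxiliary bound on $(\log u_0)''$. Additionally, both constructions must be restricted to the tail region where the ODE approximation is valid and patched with the trivial bound $u \leq 1$ in the bounded front region of $u_0$ (for instance by capping at some level $q$ strictly between $m$ and $1$ to sidestep the regime where $f'(\underline u) < 0$). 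Arranging these patches so that the resulting functions remain sub- and super-solutions on all of $\R \times [0, T]$, while preserving the $\phi(T_m + o(1)) \to m$ asymptotics, is the main technical challenge.
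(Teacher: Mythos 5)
Your argument is built around the direct ansatz $\underline u(t,x) = \phi\bigl(t + \sigma(x)\bigr)$ with $\sigma = \phi^{-1}\circ u_0$, and its perturbation $\bar u(t,x) = \phi\bigl(t + \sigma(x) + h(t)\bigr)$. The residual you computed,
\[
\underline u_t - \underline u_{xx} - f(\underline u) = -f(\underline u)\bigl[f'(\underline u)\,\sigma'(x)^2 + \sigma''(x)\bigr],
\]
is correct, and your slow-variation lemma (that $\log\bigl(u_0(X(T)+s)/u_0(X(T))\bigr)\to 0$ uniformly for $|s|\le rT$) is essentially the paper's \cref{p_oscillations}. The trouble, which you correctly sensed, is the $\sigma''$ term: the hypotheses of the theorem control only the \emph{first} derivative of $u_0$ through \eqref{e_monotonic} and \eqref{e_regular}, so $u_0''$ (and hence $\sigma''$) is completely unconstrained — it may oscillate wildly, be large negative (killing the sub-solution sign $f'\sigma'^2+\sigma''\ge 0$), or be large positive (blowing up the required correction $h(T)$). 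Your suggestion that ``mild smoothing'' or ``an auxiliary bound on $(\log u_0)''$'' might save this is exactly the issue: either move amounts to proving the theorem under a strictly stronger hypothesis than stated. Parabolic smoothing of $u(1,\cdot)$ bounds $u_{xx}$ by an absolute constant, but you need $|\sigma''| = |u_0''/u_0 - (u_0'/u_0)^2|/f'(0)$ (roughly) to be $o(1/T)$ near $x=X(T)$, i.e.\ decay proportional to $u_0$ itself; an $O(1)$ bound is nowhere near enough. So as written the argument does not close.

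The paper avoids this obstruction by choosing a different decomposition: it writes the candidate super-solution as a \emph{product} $\overline v = \phi^{T,m}(t)\,\overline w(t,x)$, where $\phi^{T,m}$ solves the pure ODE (the terminal-value problem \eqref{e_inhomogeneous_logistic_tvp}) and $\overline w$ solves the linear, reaction-free advection-diffusion equation $\overline w_t = \overline w_{xx} + 2(\phi^{T,m}_x/\phi^{T,m})\overline w_x$ with initial data $\max\{u_0/B(m,T),1\}$. The payoff of this factorization is the algebraic identity in \eqref{e_supersolution}: the reaction residual becomes
$\overline w\,\phi^{T,m}\bigl[f(\phi^{T,m})/\phi^{T,m} - f(\phi^{T,m}\overline w)/(\phi^{T,m}\overline w)\bigr]$,
whose sign is determined solely by $\overline w\gtrless 1$ (automatic from the maximum principle) and the KPP condition that $f(s)/s$ is decreasing — no derivatives of $u_0$ enter at all. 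The whole burden then shifts to showing $\overline w(T,\cdot)\approx 1$ near $X(T)$, which is done via the Feynman--Kac formula and needs \emph{only} the first-derivative slow-variation statement, i.e.\ \cref{p_oscillations}, which is exactly what \eqref{e_regular} was designed to give. In short: composing $\phi$ with $\sigma(x)$ couples the reaction and the diffusion and generates a $\sigma''$ you cannot control, whereas the multiplicative split decouples them and makes the reaction error signed for free.
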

In order to make this result clearer, we first consider a specific example.  If $f$ is the standard Fisher-KPP non-linearity, i.e.~$f(u) = u(1-u)$, then
\[
	\phi(T) = \frac{e^{T} }{1 + e^T}, ~~~~\text{ implying that }~~~ \phi(T_m - T) = \frac{m}{1-m} e^{-T} + O(e^{-2T}),
\]
where $O(e^{-2T})$ is bounded by a constant multiple of $e^{-2T}$ for
all $T$.  For a general non-linearity, $f$, one can easily see (by
linearization) that, up to leading order, $\phi(T_m - T) = c_m e^{-f'(0)T}$, where $c_m$ is a constant depending only on $f$ and $m$.  In addition, defining the level set of $u$ as
\[
	E_m(t) \stackrel{\text{def}}{=} \{x: u(t,x) = m\},
\]
if we look at, e.g., initial data such that $u_0(x) = x^{-\alpha}$ for large enough $x$, \cref{p_homogeneous} implies that
\begin{equation}\label{e_kpp_speed}
	E_m(t) \sim \left( \frac{1-m}{m} e^{t}\right)^{1/\alpha}.
\end{equation}
Notice that the non-linearity affects the constant $\frac{1-m}{m}$, as
the linearized equation would yield only $1/m$.  The fact that the
full non-linearity affects the highest order behavior is in
contrast to the case when $u_0$ decays exponentially, where the
linearized equation completely determines the highest order behavior.

An easy corollary of \cref{p_homogeneous} is the following.
\begin{cor}\label{p_level_sets}
For any $\epsilon>0$, under the assumptions of \cref{p_homogeneous}, there exists $T_0>0$ depending on all parameters such that if $T > T_0$ then
\[
	E_m(T) \subset \left[ u_0^{-1}\left(\phi(T_{m+\epsilon}-T)\right), u_0^{-1}\left(\phi(T_{m-\epsilon} - T)\right)\right].
\]
\end{cor}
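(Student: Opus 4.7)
The plan is to deduce the corollary directly from \cref{p_homogeneous} by applying it at the two perturbed levels $m-\epsilon$ and $m+\epsilon$, with any fixed value of $r>0$ (for instance $r=1$, which plays no role here). First I would observe that the expressions in the statement are well-defined for all sufficiently large $T$: since $\phi$ is increasing with $\phi(T_{m\pm\epsilon}) = m\pm\epsilon$ and $\phi'(0)$-type decay at $-\infty$ from $f'(0)>0$, we have $\phi(T_{m\pm\epsilon}-T)\to 0$ as $T\to\infty$; combined with \eqref{e_monotonic} and \eqref{e_superexponential}, the inverse $u_0^{-1}$ is defined and strictly decreasing on $(0,u_0(x_0)]$, so $u_0^{-1}(\phi(T_{m+\epsilon}-T))<u_0^{-1}(\phi(T_{m-\epsilon}-T))$ once $T$ is large.

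For the upper endpoint, I would apply \eqref{e_limsup} with $m$ replaced by $m-\epsilon$: this yields some $T_1$ (depending on $\epsilon$ and $r$) such that $u(T,x)\leq m-\epsilon/2<m$ for every $x\geq u_0^{-1}(\phi(T_{m-\epsilon}-T))-rT$ and every $T\geq T_1$. In particular, no $x\geq u_0^{-1}(\phi(T_{m-\epsilon}-T))$ lies in $E_m(T)$. Symmetrically, applying \eqref{e_liminf} with $m$ replaced by $m+\epsilon$ produces a $T_2$ such that $u(T,x)\geq m+\epsilon/2>m$ for every $x\leq u_0^{-1}(\phi(T_{m+\epsilon}-T))+rT$ and every $T\geq T_2$; so no $x\leq u_0^{-1}(\phi(T_{m+\epsilon}-T))$ lies in $E_m(T)$ either.

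Taking $T_0=\max(T_1,T_2)$ (plus whatever threshold is needed to make the $u_0^{-1}$ expressions valid), both exclusions hold simultaneously, which is exactly the required inclusion. There is no genuine obstacle in the argument: the corollary is just a reformulation of the two one-sided bounds of \cref{p_homogeneous} in terms of level sets, and the only mild verification needed is the monotonicity bookkeeping that matches $m+\epsilon$ with the left endpoint and $m-\epsilon$ with the right endpoint of the enclosing interval.
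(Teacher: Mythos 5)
Your proof is correct and is exactly the elementary deduction the paper has in mind (the paper states the corollary without proof, calling it ``an easy corollary''). Applying \eqref{e_limsup} at level $m-\epsilon$ and \eqref{e_liminf} at level $m+\epsilon$, dropping the $\pm rT$ slack, and combining is the right move, and your monotonicity bookkeeping ($\phi$ increasing so $T_{m-\epsilon}<T_{m+\epsilon}$, $u_0^{-1}$ decreasing so the endpoints land in the stated order) is the only verification needed.
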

Again, it is instructive to look at the example that we considered
above.  
\Cref{p_level_sets} implies that, for any $\epsilon$ and any $t$ large enough, we
have
\[
	E_m(t) \subset \left[ \left(\frac{m}{1-m} - \epsilon\right)^{1/\alpha} e^{t/\alpha}, \left(\frac{m}{1-m}+\epsilon\right)^{1/\alpha} e^{t/\alpha}\right].
\]

\begin{rem}\label{r_differentiability}
We comment briefly on the assumption on the decay of the first derivative of $u_0$, \cref{e_regular}.  In practice, this means that $u_0$ decays slower than $e^{-\sqrt{x}}$.  We could remove this restriction, assuming only that $u_0$ satisfies \cref{e_superexponential}.  In this case, we would obtain a linear in time error term in \cref{e_limsup,e_liminf} and in \cref{p_level_sets}.  In other words, there is a constant $r>0$ such that, for any $m\in (0,1)$ and $\epsilon >0$, we have
\[
	E_m(T) \subset \left[ u_0^{-1}\left( \phi(T_{m+\epsilon} - T)\right) - rT, u_0^{-1}\left(\phi(T_{m-\epsilon} - T)\right) + rT \right],
\]
for $T$ large enough.  We omit proving this here, in the interest of presenting a single, clear argument and result.
\end{rem}

\subsection{The inhomogeneous case}

When the non-linearity, $f$, depends periodically on $x$, the level
sets fail to capture the spreading behavior.  Intuitively this is
because the function $u_0$ is very flat at infinity, so it will grow
as if it were a solution to the periodic, inhomogeneous Fisher-KPP
equation.  In particular, $u$ will oscillate in space enough that the width of the level sets will grow to infinity as $t$ tends to infinity.  In other words, the distance between the rightmost and leftmost points of the level set $E_m(t)$ tends to infinity.

To circumvent this, we look instead at the locations of the intervals of length $L$, the length of the period of $f$,
of a given mean oscillation.  In other words, we seek to describe the
location of the average-level
\[
	\overline{E}_m(t) \stackrel{\text{def}}{=} \left\{x\in \R: \fint_x^{x+L} u(t,y) dy = m\right\},
\]
where we define, for any interval $[a,b]$ and any function $\gamma(x)$,
\[
	\fint_a^b \gamma(x) dx \stackrel{\text{def}}{=} \frac{1}{|b-a|} \int_a^b \gamma(x) dx.
\]
Using $\overline{E}_m(t)$, we find similar results to \cref{p_homogeneous} in the
inhomogeneous setting.

As we alluded to above, instead of the logistic equation, the behavior
of $u$ on each interval of length $L$ is captured by the global in time solution
to
\begin{equation}\label{e_logistic_global}
	\begin{cases}
		\varphi_t = \varphi_{xx} + f(x,\varphi), ~~\text{ for all } (t,x)\in\R\times \T,\\
		\fint \varphi(0,x)dx = \frac{1}{2},
	\end{cases}
\end{equation}
where $\T = [0,L]$ with the ends identified.  In other words, $\phi$ is an $L$-periodic solution to the equation above.  We briefly note that, although
the techniques to show the well-posedness of \cref{e_logistic_global}
are similar to those used in the theory of well-posedness for the
Fisher-KPP equation, see e.g. \cite{BerestyckiNirenberg}, we are
unable to find a treatment of it in the literature.  As such, we
dedicate \cref{s_wellposed} to proving the existence and uniqueness of the solution to this problem.


We define one last quantity before we state our results.  For each $m \in (0,1)$ and each $T>0$, we may solve the terminal value problem
\begin{equation}\label{e_inhomogeneous_logistic_tvp}
\begin{cases}
	\phi^{T,m}_t = \phi^{T,m}_{xx} + f(x, \phi^{T,m}), ~~\text{ on } [0,T]\times [0,L],\\
	\frac{1}{L}\int_0^L \phi^{T,m}(T, x)dx = m,\\
	\phi^{T,m}(0) \equiv B(m,T),
\end{cases}
\end{equation}
with periodic boundary conditions and where $B(m,T)$ is an unknown number depending only on $f$, $m$, and $T$.  It is simple to
check that the system \cref{e_inhomogeneous_logistic_tvp} is well
defined and is continuous in $m$ and $T$.  It is also easy to check
that $B(m,T)$ is increasing in $m$ and decreasing in $T$.

We now formulate our results precisely.
\begin{thm}\label{p_inhomogeneous}
Suppose that $u$ solves \cref{e_kpp} and that $f$ is an $L$-periodic function satisfying the conditions in \cref{e_f}.  Suppose $u_0$ satisfies the conditions in \cref{e_superexponential,e_left_limit,e_monotonic,e_regular}.  For any $r>0$,  we have that
\begin{equation}\label{e_mean_upper_bound}
	\limsup_{T\to\infty} \sup_{x \geq u_0^{-1}\left(B(m,T)\right) - rT} \fint_x^{x+L} u(T,y) dy \leq m,
\end{equation}
and
\begin{equation}\label{e_mean_lower_bound}
	\liminf_{T\to\infty} \inf_{x \leq u_0^{-1}\left(B(m,T)\right)+ rT} \fint_x^{x+L} u(T,y) dy \geq m,
\end{equation}
where $B(m,T)$ is defined in \eqref{e_inhomogeneous_logistic_tvp}.
\end{thm}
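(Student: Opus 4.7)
The plan is to prove the upper bound \cref{e_mean_upper_bound} via a localized super-solution argument; the lower bound \cref{e_mean_lower_bound} is essentially symmetric. The picture is that in the region $x \geq u_0^{-1}(B(m,T)) - rT$, the initial data $u_0$ is nearly constant on windows of length $O(T)$ thanks to \cref{e_regular}, so $u(T,\cdot)$ on any such window should closely resemble the $L$-periodic solution $\phi^{T,m}$ which starts from the constant $B(m,T)$.

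First, I fix $r > 0$, $x \geq x_T := u_0^{-1}(B(m,T)) - rT$, and choose $D = D_0 T$ with $D_0 > 2\sqrt{C}$, where $C := \sup_y f_u(y,0)$. Setting $M_+ := u_0(x-D)$, I define $V^+$ as the solution of \cref{e_kpp} with step initial data $V_0^+ \equiv 1$ on $(-\infty, x-D)$ and $V_0^+ \equiv M_+$ on $[x-D, \infty)$; by \cref{e_monotonic}, $V_0^+ \geq u_0$ for $T$ large, so comparison yields $u \leq V^+$. From \cref{e_regular} together with $|\log u_0(y)| \sim \mu_1 T$ near $y = x$ (where $\mu_1$ is the principal eigenvalue of $\partial_y^2 + f_u(y,0)$ on $\T$), one obtains $-(\log u_0)'(y) = o(1/T)$; integrating over intervals of length $rT$ and $D$ gives $M_+ = (1 + o(1))B(m,T)$ uniformly for $x \geq x_T$.

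Next, I compare $V^+$ with the periodic solution $\phi^+ := \phi^{T,m_T^+}$, where $m_T^+$ is defined implicitly by $B(m_T^+,T) = M_+$, so $\phi^+(0,\cdot) \equiv M_+$. Since $V_0^+ \geq M_+$ globally, $W := V^+ - \phi^+ \geq 0$, and the KPP condition $f_u(y, u) \leq f_u(y, 0) \leq C$ (which follows from $f/u$ decreasing) gives
\[
  W_t \leq W_{yy} + CW, \qquad W(0,\cdot) = (1-M_+)\,\mathbf{1}_{\{y < x-D\}}.
\]
Duhamel together with the tail estimate $\erfc(z) \leq e^{-z^2}$ then yields, for all $y \geq x$,
\[
  W(T,y) \leq (1-M_+)\,e^{CT}\,\tfrac12\,\erfc\!\left(\tfrac{D_0\sqrt{T}}{2}\right) \leq e^{(C - D_0^2/4)T} = e^{-\eta T}
\]
for some $\eta > 0$. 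Averaging over $[x,x+L]$ and using the $L$-periodicity of $\phi^+(T,\cdot)$,
\[
  \fint_x^{x+L} u(T,y)\,dy \leq \fint_x^{x+L} \phi^+(T,y)\,dy + e^{-\eta T} = m_T^+ + e^{-\eta T},
\]
and the problem reduces to showing $m_T^+ \to m$ as $T \to \infty$.

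The main technical obstacle is this continuity step $m_T^+ \to m$ given $M_+ = (1+o(1))B(m,T)$. I would derive, as part of the analysis of \cref{e_logistic_global} carried out in \cref{s_wellposed}, the linear-regime asymptotic $B(m,T) \sim c(m)\,e^{-\mu_1 T}$ for each fixed $m \in (0,1)$, with $c$ strictly increasing on $(0,1)$; this follows because a small constant $M$ grows in the linear regime along the principal eigenmode and saturates at order one after time $\sim \mu_1^{-1}\log(1/M)$. Granted this scaling, $B(m(1\pm\delta),T)/B(m,T) \to c(m(1\pm\delta))/c(m)$, which is bounded away from $1$, so for $T$ large $M_+ \in [B(m(1-\delta),T),\,B(m(1+\delta),T)]$; monotonicity of $M \mapsto \fint_0^L \phi^{T,\cdot}$ in $M$ then gives $m_T^+ \in [m(1-\delta),\,m(1+\delta)]$, and sending $\delta \to 0$ closes the upper bound. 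The lower bound is parallel: use the plateau sub-solution $V_0^- := u_0(x+L+D)\,\mathbf{1}_{[x-D,\,x+L+D]}$, which satisfies $V_0^- \leq u_0$ by \cref{e_monotonic}, and bound $\phi^- - V^-$ by the same linearized inequality, with heat-kernel tails from both endpoints of the plateau providing the $e^{-\eta T}$ error.
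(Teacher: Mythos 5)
Your route is genuinely different from the paper's. The paper writes its super-solution as a product $\overline{v}=\phi^{T,m}\overline{w}$, where $\phi^{T,m}$ carries the full nonlinearity and $\overline{w}$ solves a pure drift--diffusion equation with data $\max\{u_0/B(m,T),1\}$; the mismatch between $u_0$ and the constant $B(m,T)$ is thus absorbed entirely into the diffusive factor, which is shown to be $1+o(1)$ on the relevant region via Feynman--Kac, Gaussian tails, and the oscillation lemma (\cref{p_oscillations}, fed by \cref{p_exponential}). You instead build a plateau super-solution $V^+$ of the full equation and compare it to the periodic solution started from the constant $M_+$, controlling the difference through the linearized equation and a heat-kernel tail. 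That comparison is sound: $f_u(y,\xi)\le f(y,\xi)/\xi\le f_u(y,0)$ follows from $f/u$ decreasing, so $W\ge 0$ is indeed a subsolution of $W_t=W_{yy}+CW$, and your use of \cref{e_regular} over windows of length $O(T)$ is exactly the content of \cref{p_oscillations}. Two small imprecisions: for the supremum over all $x\ge x_T$ you only get (and only need) the one-sided bound $M_+\le(1+o(1))B(m,T)$, since $u_0(x-D)$ can be far smaller than $B(m,T)$ when $x\gg x_T$; and for the lower bound at very negative $x$ the monotonicity \cref{e_monotonic} is unavailable, so the plateau height must be taken as a minimum of $u_0$ over the plateau.

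The genuine gap is the continuity step $m_T^{\pm}\to m$. As written it rests on the sharp asymptotic $B(m,T)\sim c(m)e^{-f_0 T}$ with $c$ continuous and strictly increasing, which you justify only heuristically; this is essentially \cref{p_inhomogeneous_global}, whose proof occupies \cref{s_global_in_time} and itself requires the global-in-time problem \cref{e_logistic_global}. There is no circularity, but you are importing a large unproved ingredient into the proof of \cref{p_inhomogeneous}. The fix is elementary and bypasses the asymptotic entirely: since $f(x,s)/s$ is decreasing, $f(x,(1+\epsilon)s)\le(1+\epsilon)f(x,s)$, so $(1+\epsilon)\phi^{T,m}$ (truncated at $1$) is a super-solution with initial value $(1+\epsilon)B(m,T)\ge M_+$; hence $\phi^{+}\le(1+\epsilon)\phi^{T,m}$ and $m_T^{+}\le(1+\epsilon)m$, with the symmetric statement for $(1-\epsilon)\phi^{T,m}$ and $m_T^{-}$. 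This is precisely the structural fact the paper's product decomposition exploits. With that substitution your argument closes.
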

This result gives an analogous result to the \cref{p_homogeneous}.
Namely, that to find an interval $[nL, nL+L]$ such that $u(t,x)$ take
the value $m$ on average, one has to choose $n$ such that $u_0(x)$
roughly looks like $B(m,T)$ for $x \in [nL, nL + L]$.

In \cref{p_inhomogeneous}, the $B(m,T)$ term is a bit mysterious.  In
the homogeneous case, we characterize the location of the level sets in
terms of
the global-in-time solution to the logistic equation, an ordinary differential equation.  The following proposition does the same here, making rigorous the ODE-like behavior of $u$ by characterizing $B(m,T)$ in terms of solution to a global-in-time analogue to the logistic equation.
\begin{prop}\label{p_inhomogeneous_global}
Let the conditions of \cref{p_inhomogeneous} be satisfied.  Let $\psi_0$ and $f_0$ be the unique positive solutions to the eigenvalue problem
\[\begin{cases}
	(\psi_0)_{xx} + f_u(x,0) \psi_0 = f_0 \psi_0, ~~~x\in\T,\\
	\int \psi_0^2 dx = 1.
\end{cases}\]
Let $T_m$ be the unique time that $\fint \varphi(T_m) dx = m$.  Then
\[
	\lim_{T\to\infty} \frac{B(m,T) \int \psi_0(x)dx }{ \int \varphi(T_m - T,x) \psi_0(x) dx} = 1.
\]
\end{prop}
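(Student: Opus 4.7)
The heart of the proposition is to identify the initial datum $B(m,T)$ of the terminal-value problem with a piece of the global orbit $\varphi$ evaluated at time $T_m-T$ (i.e.\ very far in the past). Both solutions spend most of their life in the linear regime near the unstable equilibrium $0$, where the dynamics is governed by the principal eigenmode $e^{f_0 t}\psi_0$ of $\partial_{xx}+f_u(x,0)$. The natural time shift between them is $T-T_m$, forced by the matching constraint $\fint \phi^{T,m}(T)\,dx=m=\fint\varphi(T_m)\,dx$. My plan has four steps.

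\textbf{Step 1 ($B(m,T)\to 0$).} I would first show that $B(m,T)\to 0$ as $T\to\infty$ by contradiction. If $B(m,T_n)\ge\epsilon$ along a subsequence, then comparison with the spatially constant sub-solution $\alpha'=\inf_xf(x,\alpha)/\alpha\cdot\alpha$ with $\alpha(0)=\epsilon$, which converges to $1$, forces $\fint\phi^{T_n,m}(T_n)\to 1$, contradicting the terminal condition.

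\textbf{Step 2 (linearization of $\phi^{T,m}$).} For the constant datum $B(m,T)$, the linearized solution $w_t=w_{xx}+f_u(x,0)w$ has the eigenfunction expansion
\[
  w(t,x)=B(m,T)\sum_n\left(\int \psi_n\,dx\right) e^{\lambda_n t}\psi_n(x),
\]
and the spectral gap $f_0-\lambda_1>0$ yields, for $t$ past a fixed mixing time $K$,
\[
  w(t,x)=B(m,T)\left(\int\psi_0\,dx\right)e^{f_0 t}\psi_0(x)\bigl(1+O(e^{-(f_0-\lambda_1)t})\bigr).
\]
The $C^{1,\delta}$ bound $|f(x,u)-f_u(x,0)u|\le Cu^{1+\delta}$ together with a Duhamel estimate shows $\|\phi^{T,m}(t)-w(t)\|_\infty=o(\|w(t)\|_\infty)$ uniformly as long as $\phi^{T,m}(t)\le\delta_0$ for a small enough fixed $\delta_0$.

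\textbf{Step 3 (linearization of $\varphi$ at $-\infty$).} Using the well-posedness theory of \cref{s_wellposed}, the unique global-in-time positive solution $\varphi$ decays to $0$ as $t\to-\infty$ along the unique unstable direction, so there is a constant $C>0$, pinned by $\fint\varphi(0)\,dx=\tfrac12$, with
\[
  \varphi(t,x)=Ce^{f_0 t}\psi_0(x)(1+o(1))\qquad\text{as }t\to-\infty.
\]
Since $\int\psi_0^2\,dx=1$, this gives $\int\varphi(T_m-T,x)\psi_0(x)\,dx=Ce^{f_0(T_m-T)}(1+o(1))$.

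\textbf{Step 4 (matching).} I would compare $\phi^{T,m}$ with the shifted orbit $\tilde\varphi(t,x):=\varphi(t-(T-T_m),x)$, which also has mean $m$ at $t=T$. The crucial claim is that at an intermediate time $t_\star$, chosen so both quantities are of order a small fixed $\delta_0$, one has $\phi^{T,m}(t_\star,\cdot)=(1+o(1))\tilde\varphi(t_\star,\cdot)$ uniformly in $x$. This uses that $\varphi$ attracts, up to a single time shift, every positive orbit emerging from $0$ along $\psi_0$: by the order-preserving Fisher-KPP semigroup and the eventual convergence of both orbits to the stable equilibrium $1$, the time-shift is determined modulo $o(1)$ by the terminal condition at $t=T$. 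Combining Steps 2 and 3 at $t=t_\star$ yields
\[
  B(m,T)\left(\int\psi_0\,dx\right)e^{f_0 t_\star}=Ce^{f_0(t_\star-(T-T_m))}(1+o(1)),
\]
i.e.\ $B(m,T)\int\psi_0\,dx=Ce^{f_0(T_m-T)}(1+o(1))$. Dividing by the expression from Step 3 gives the proposition.

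The main obstacle is the matching in Step 4: the linearization does not contract orbits near the unstable state $0$, so comparison of $\phi^{T,m}$ and $\tilde\varphi$ cannot be done by naively invoking stability. The rigorous matching instead proceeds by projecting both solutions onto $\psi_0$, controlling the transverse spectral remainder via the gap estimate of Step 2, and bootstrapping the Duhamel estimate up to a time $t_\star$ of order $\log(1/B(m,T))/f_0$; once both solutions agree to leading order at $t_\star$, forward propagation to $t=T$ is automatic.
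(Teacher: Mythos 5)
Your overall strategy coincides with the paper's: linearize both $\phi^{T,m}$ and $\varphi$ about $0$ along the principal mode $e^{f_0 t}\psi_0$, control the nonlinear correction via $|f(x,u)-f_u(x,0)u|\le Mu^{1+\delta}$ (the paper does this in \cref{p_linearized_error} by an energy estimate on the $\psi_0$-projection and its orthogonal complement rather than a full eigenfunction expansion plus Duhamel, but the two are equivalent here), and then match the two solutions through the mean condition at time $T$. Your Steps 1--3 are fine; Step 1 is subsumed by the sharper bounds of \cref{p_exponential}, and Step 3 is exactly \cref{p_exponential_global}.

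The gap is in Step 4. The assertion that ``the time-shift is determined modulo $o(1)$ by the terminal condition at $t=T$'' is the entire content of the proposition, and the justification you offer --- order-preservation plus eventual convergence of both orbits to $1$ --- cannot deliver it: any two time-translates of $\varphi$ converge to $1$ and eventually both have mean close to $m$, so attraction to the stable state pins down nothing. The quantitative ingredient you are missing is that $\partial_t\fint\varphi\,dx=\fint f(x,\varphi)\,dx\ge\epsilon>0$ whenever $\fint\varphi\,dx$ stays in a compact subset of $(0,1)$. With it your matching closes: if the $\psi_0$-amplitudes of $\phi^{T,m}(0)$ and $\tilde\varphi(0)$ differed by a factor at least $1+\theta$ along a subsequence, then at your intermediate time $t_\star$ (both solutions being in the linear regime) one would have $\phi^{T,m}(t_\star,\cdot)\ge\tilde\varphi(t_\star+s_0,\cdot)$ with $s_0\ge\log(1+\theta)/(2f_0)>0$, and the comparison principle together with the displayed lower bound would force $m=\fint\phi^{T,m}(T,x)\,dx\ge\fint\varphi(T_m+s_0,x)\,dx\ge m+\epsilon s_0$, a contradiction. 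The paper runs this same mechanism in the opposite direction: it defines $S(m,T)$ so that the $\psi_0$-projections of $\varphi(S(m,T),\cdot)$ and of the constant $B(m,T)$ agree exactly, applies \cref{p_linearized_error} and forward comparison to get $\fint\varphi(S(m,T)+T,x)\,dx\to m$, and then uses the lower bound on $\partial_t\fint\varphi\,dx$ to conclude $S(m,T)-(T_m-T)\to0$, which by \cref{p_exponential_global} is the stated limit. Once the monotonicity input is supplied, your route is equivalent.
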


Again we may obtain an easy corollary regarding the movement of the average-level sets $\overline{E}_m(t)$.
\begin{cor}\label{p_inhomogeneous_level_sets}
Let the hypotheses of \cref{p_inhomogeneous_global} be satisfied.  For any $\epsilon>0$, and any $m\in (0,1)$ there is a time $T_0$, depending on all parameters, such that if $T\geq T_0$, then
\[
	\overline{E}_m(t)
		\subset \left[ u_0^{-1}\left( \varphi(T_{m+\epsilon} - T)\right), u_0^{-1}\left( \varphi(T_{m-\epsilon} - T)\right) \right].
\]
\end{cor}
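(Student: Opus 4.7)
The plan is to combine \cref{p_inhomogeneous} with \cref{p_inhomogeneous_global}, using the strict monotonicity of $T_{m'}$ in $m'$ to open a multiplicative gap that, once pulled through the decreasing function $u_0^{-1}$, automatically absorbs the linear $rT$ error from \cref{p_inhomogeneous}. Throughout I write $\bar\varphi(\tau) := \int \varphi(\tau,x)\psi_0(x)\,dx / \int \psi_0(x)\,dx$; this scalar is the object the corollary implicitly denotes by $\varphi(\tau)$, since it is precisely the quantity equated to $B(m,T)$ by \cref{p_inhomogeneous_global}.

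Fix $\epsilon>0$ and any $r>0$, and apply \cref{p_inhomogeneous} twice, with $m$ replaced by $m+\epsilon/2$ and $m-\epsilon/2$. Because $u_0$ is eventually strictly decreasing by \eqref{e_monotonic}, $u_0^{-1}$ is decreasing on the tail, and \eqref{e_mean_upper_bound}--\eqref{e_mean_lower_bound} give, for all $T$ sufficiently large,
\[
\overline{E}_m(T) \subset \bigl[\,u_0^{-1}(B(m+\epsilon/2,T)) + rT,\; u_0^{-1}(B(m-\epsilon/2,T)) - rT\,\bigr],
\]
because outside this interval $\fint_x^{x+L} u(T,y)\,dy$ differs from $m$ by at least $\epsilon/4$.

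Next, by \cref{p_inhomogeneous_global}, $B(m',T) = (1+o(1))\,\bar\varphi(T_{m'}-T)$ as $T\to\infty$. Since $\bar\varphi(t)$ is strictly increasing in $t$, $T_{m'}$ is strictly increasing in $m'$, so $T_{m+\epsilon/2}<T_{m+\epsilon}$ and $T_{m-\epsilon}<T_{m-\epsilon/2}$. The principal-eigenpair asymptotic $\bar\varphi(\tau)\sim c\,e^{f_0\tau}$ as $\tau\to-\infty$ then implies that the ratio $\bar\varphi(T_{m+\epsilon}-T)/\bar\varphi(T_{m+\epsilon/2}-T)$ tends to the constant $e^{f_0(T_{m+\epsilon}-T_{m+\epsilon/2})}>1$, and analogously for the pair $T_{m-\epsilon/2},T_{m-\epsilon}$. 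Combined with the $(1+o(1))$ from \cref{p_inhomogeneous_global}, this yields, for all large $T$,
\[
B(m+\epsilon/2,T) < \bar\varphi(T_{m+\epsilon}-T) \qquad\text{and}\qquad B(m-\epsilon/2,T) > \bar\varphi(T_{m-\epsilon}-T).
\]

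Applying the decreasing function $u_0^{-1}$ reverses both inequalities, and since $rT>0$ the shifts $+rT$ and $-rT$ only strengthen the containment; hence the Step~1 interval is contained in $[\,u_0^{-1}(\bar\varphi(T_{m+\epsilon}-T)),\;u_0^{-1}(\bar\varphi(T_{m-\epsilon}-T))\,]$, which is the claim. The only non-routine ingredient is the exponential asymptotic of $\bar\varphi(\tau)$ as $\tau\to-\infty$; if it is not already supplied with the construction of $\varphi$ in \cref{s_wellposed}, it can be obtained by sandwiching $\varphi$ between super- and sub-solutions of the form $c_\pm\psi_0(x)e^{f_0 t}$ (valid while $\varphi$ is small, using the sub-tangency bound $f(x,u)\le f_u(x,0)u$ from \eqref{e_f}) and the comparison principle. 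Pleasantly, the additive $rT$ correction from \cref{p_inhomogeneous} plays no role in the final step, since the multiplicative $\epsilon$-gap already points in the favorable direction after passing through $u_0^{-1}$.
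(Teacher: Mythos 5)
The paper omits the proof of this corollary (it is announced as an ``easy corollary'' and never proved), and your argument is exactly the intended derivation: apply \cref{p_inhomogeneous} at the shifted levels $m\pm\epsilon/2$ to trap $\overline{E}_m(T)$ between $u_0^{-1}(B(m+\epsilon/2,T))+rT$ and $u_0^{-1}(B(m-\epsilon/2,T))-rT$, convert $B(m\pm\epsilon/2,T)$ into $\psi_0$-weighted averages of $\varphi$ via \cref{p_inhomogeneous_global}, and use the exponential asymptotics of \cref{p_exponential_global} together with the strict gaps $T_{m\pm\epsilon}\neq T_{m\pm\epsilon/2}$ to absorb both the $(1+o(1))$ factors and the $rT$ shifts after passing through the decreasing map $u_0^{-1}$. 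Your reading of the scalar $\varphi(T_{m'}-T)$ as $\int \varphi(T_{m'}-T,x)\psi_0(x)\,dx\big/\int\psi_0(x)\,dx$ is moreover the only one under which the statement holds for every $\epsilon>0$: with, say, the plain average $\fint\varphi(T_{m'}-T,x)\,dx$ one would need the fixed factor $L/(\int\psi_0)^2\geq 1$ to be beaten by $e^{f_0(T_{m+\epsilon}-T_{m})}$, which fails for small $\epsilon$ whenever $\psi_0$ is nonconstant, so flagging and fixing that interpretation is the right call.
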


As a part of the proof, we will show that there is a positive constant
$\alpha>0$ such that, for very large negative times, $\varphi(t,x)
\sim \alpha e^{f_0 t} \psi_0(x)$.  Hence \cref{p_inhomogeneous_global}
and \cref{p_inhomogeneous_level_sets} imply that there is a constant
$c_m$, depending only on $m$ and $f$, such that
\[
	\overline{E}_m(t) \sim u_0^{-1} \left( c_m e^{-f_0 t}\right)
\]
holds for large times $t$.  The constant $c_m$ is, in principle,
computable given a specific non-linearity $f$ by solving the global in
time periodic problem.

Finally, we prove a result which is analogous to the asymptotic
flatness result of \cite{HamelRoques}. This final result shows that
$u$ grows exactly as the global in time solution of the periodic
problem.
\begin{thm}\label{p_inhomogeneous_flatness}
Let the conditions of \cref{p_inhomogeneous} be satisfied.  
Let $\psi_0$ be as in \cref{p_inhomogeneous_global}.  Define
\begin{equation}\label{e_sn}
	S_{n} = \left( \int \psi_0(x)dx\right)^2 u_0(nL).
\end{equation}
Then
\[
	\lim_{T\to\infty} \max_{n \in \Z} \|u(T,\cdot) - \varphi(T^{S_n} + T, \cdot)\|_{L^\infty([nL, nL+L])} = 0.
\]
\end{thm}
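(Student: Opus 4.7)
My plan is to sandwich $u(T,\cdot)$ on each interval $[nL,nL+L]$ between two auxiliary solutions of~\eqref{e_kpp} with spatially constant initial data $(1\pm\eta)u_0(nL)$, and then to identify these auxiliary solutions with time translates of $\varphi$. The starting point is the slow variation of $u_0$ from~\eqref{e_regular}: I would first show that, for any $\eta>0$ and all sufficiently large $|n|$, there is a window $J_n=[nL-M_n,nL+L+M_n]$ with $M_n\to\infty$ on which $(1-\eta)u_0(nL)\leq u_0(x)\leq(1+\eta)u_0(nL)$. This is because \eqref{e_regular} gives $|u_0'|/u_0=o(1/|\log u_0|)$, so the multiplicative oscillation of $u_0$ over any interval of bounded length inside $J_n$ tends to $1$.

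Let $v_n^\pm$ solve~\eqref{e_kpp} on $\R$ with constant initial data $(1\pm\eta)u_0(nL)$. By the parabolic comparison principle, $v_n^-\leq u\leq v_n^+$ on $J_n$ up to errors caused by propagation of boundary effects from $\partial J_n$; since $M_n\to\infty$, these errors vanish in the limit. I would then control $v_n^\pm$ in two regimes. In the linearized regime, $v_n^\pm(t,x)\approx(1\pm\eta)u_0(nL)\bigl(\int\psi_0\bigr)\psi_0(x)\,e^{f_0 t}$. Matching this to the asymptotic $\varphi(s,x)\sim\alpha e^{f_0 s}\psi_0(x)$ as $s\to-\infty$ (established in the proof of \cref{p_inhomogeneous_global}) identifies the appropriate time translate as $\varphi(T^{S_n^\pm}+t,\cdot)$ with $S_n^\pm=(1\pm\eta)S_n$; precisely this matching motivates the definition of $S_n$ given in~\eqref{e_sn}. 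In the saturation regime, both $v_n^\pm(t,\cdot)$ and $\varphi(T^{S_n^\pm}+t,\cdot)$ approach the periodic steady state $1$ exponentially fast, by the last two conditions in~\eqref{e_f}.

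Sending $\eta\to 0$ and using continuity of $\varphi$ in its time argument will yield the claimed $L^\infty$ closeness of $u(T,\cdot)$ and $\varphi(T^{S_n}+T,\cdot)$ on $[nL,nL+L]$ for each fixed $n$. Uniformity in $n$ I would obtain by a case analysis on the range of $T^{S_n}+T$: when $T^{S_n}+T\to+\infty$, the closeness is driven by exponential attraction to the periodic state $1$; when $T^{S_n}+T$ stays bounded, compactness and continuity of $\varphi$ together with the mean-oscillation information from \cref{p_inhomogeneous} pin down the limit; and when $T^{S_n}+T\to-\infty$, closeness to $0$ together with the linearized approximation controls the error.

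The main obstacle will be the uniformity over $n$ of the stability result that bridges the linearized and saturation regimes. Because $T^{S_n}$ varies from $+\infty$ down to $-\infty$ as $n$ ranges over $\Z$, the approximation $v_n^\pm\approx\varphi(T^{S_n^\pm}+\cdot,\cdot)$ must hold uniformly across an unbounded family of time translates; I would attack this using the monotone structure of KPP on $\T$ together with the spectral gap provided by the principal eigenvalue $f_0$, and the $C^{1,\delta}$ regularity of $f$ from~\eqref{e_f}.
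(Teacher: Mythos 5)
Your overall geometry is close to the paper's — both you and the paper work interval-by-interval, compare against a ``flat-initial-data'' solution, match the two objects in the linearized regime through the principal eigenpair, and then run a case analysis on $T^{S_n}+T$ — but there is a genuine gap in the central comparison step.

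The crux of your plan is the statement that, because $u_0$ varies slowly, one has $(1-\eta)u_0(nL)\le u_0\le(1+\eta)u_0(nL)$ on a growing window $J_n=[nL-M_n,nL+L+M_n]$, and therefore $v_n^-\le u\le v_n^+$ on $[nL,nL+L]$ up to ``boundary effects that vanish since $M_n\to\infty$.'' That last clause is doing all the work and it is not automatic. The comparison $u(0)\le v_n^+(0)$ fails globally: to the left of $J_n$ the initial data $u_0$ is bounded below by the positive constant from~\eqref{e_left_limit}, and by time $T$ the solution has saturated to $1$ on a region of width comparable to $T$. Because the equation is nonlinear, this saturated region invades $J_n$ at a definite speed, and there is no softening of this by a pure maximum-principle argument on the unfrozen problem. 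To actually prove the error is negligible you need a quantitative statement that the reaction-free part of $u$ spreads diffusively rather than ballistically — which is precisely what the paper's factorization $u=\phi^{T,m}\,\overline w$ (resp.\ $\phi^{T,m}\,\underline w$) achieves: $\overline w$ obeys a pure drift--diffusion equation with bounded drift~\eqref{e_advection}, so Feynman--Kac gives Gaussian tail control of the influence of the far-left data. Your $v_n^\pm$ are essentially the paper's $\phi^{T,m}$ with a slightly different choice of constant initial value, so the remaining content of your step is exactly the paper's \cref{p_convergence_phit}, which in turn rests on the Feynman--Kac argument of \cref{s_inhomogeneous}. Simply asserting that $M_n\to\infty$ (or even that $M_n/T\to\infty$) kills the boundary error is not a proof; it needs the ratio decomposition or an equivalent barrier.

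A second, smaller gap: the matching ``$v_n^\pm(t,\cdot)\approx(1\pm\eta)u_0(nL)(\int\psi_0)\psi_0\,e^{f_0 t}$'' is only the leading-order linear prediction. The paper's \cref{p_linearized_error} is the quantitative version of this statement, controlling the nonlinear correction via~\eqref{e_concave} and a spectral-gap estimate, and it is needed to get a rate that can be taken uniformly over $n$ (equivalently over $m$ bounded away from $1$). Moreover, the paper carefully distinguishes two anchoring times: $S(m,T)$, defined by the spectral projection~\eqref{e_smt}, and $T^{S_n}$, defined by the spatial average; it then proves $T^{S_n}-S(m,T)\to0$ using~\eqref{e_bmt_n} and \cref{p_exponential_global}. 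Your sketch collapses these into a single heuristic identification. That identification is the \emph{reason} for the definition~\eqref{e_sn}, but it is not free: the nonlinear corrections between these two time parameters have to be shown to vanish uniformly. If you retain the paper's \cref{p_convergence_phit} and \cref{p_convergence_smt} as black boxes, your outline would then be essentially the paper's proof; absent those, the two steps above are real gaps.
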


\subsubsection*{Related work}
%

We will now briefly describe related work.  As we have mentioned above, 
the most closely related work is that of Hamel and Roques~\cite{HamelRoques}.  In this novel article, the authors present three main results regarding the solutions to the homogeneous Fisher-KPP equation (i.e. \eqref{e_kpp} with $f$ depending only on $u$).  First, they show that $\|u_x/u\|_{L^\infty(\R)} \to 0$ as $t$ tends to infinity.  Further, they show that if the initial data decays slower than $x^{-\alpha_f}$, for some $\alpha_f$ depending only on the smoothness of $f$, there are constants $C_m$ and $C$ such that, for $t$ large enough,
\[
	E_m(t) \subset\left[ u_0^{-1}(C_m \exp - f'(0)t),  u_0^{-1}(C m \exp - f'(0)t)\right]
.\]
We note that $\alpha_f = 2$ when $f(u) = u(1-u)$.  Lastly, for more general initial data, they obtain the exponential order.  Precisely, they show that, for any $\epsilon$,
\[
	E_m(t) \subset \left[u_0^{-1}\left(\exp - (f'(0) - \epsilon)t\right)
				, u_0^{-1}\left(\exp - (f'(0) + \epsilon)t\right)\right],
\]
holds for any $t$ large enough.

Hamel and Roques obtain their bounds on $E_m(t)$ by carefully building sub- and super-solutions using the function, $u_0(x) e^{ f'(0) t}$, and using the fact that we may bound $f$ below by a second order Taylor approximation at zero.  Our method involves quite different sub- and super-solutions, and the difference in precision between the constant in their bounds and our bounds, in \cref{e_kpp_speed}, can be attributed to the fact that our sub- and super-solutions feel the full affect of the non-linearity, $f$.

Beyond the work of Hamel and Roques, there has been recent interest in
super-linear in time propagation phenomena, or accelerating fronts.
In a series of articles~\cite{CabreRoquejoffre,CCR,CoulonRoquejoffre},
Cabr\'e, Coulon, and Roquejoffre investigated the phenomenon of
exponential spreading of level sets of solutions to the Fisher-KPP
with the fractional Laplacian.  In both cases, the mechanism is the same: when the data is spread out, $u(\cdot, x)$ evolves according to the logistic equation for every $x$.  In this model, the fat tails of the fractional Laplacian are responsible for spreading $u$ out in finite time.  We note that our techniques can be adapted
to yield analogous results when $u_0$ decays slower than the critical
cases investigated by Cabr\'e, Coulon, and Roquejoffre.  These accelerating fronts have also been
observed in biological models with a non-local dispersal operator with
fat tails~\cite{Garnier, KLV, MedlockKot} and in a kinetic
reaction-transport equation~\cite{BouinCalvezNadin}.  Finally, they
have been conjectured to occur in a reaction-diffusion-mutation model
with variable motility~\cite{BCMetal}.
%
%
%
%
%

\subsubsection*{Plan of the paper}

The results of \cref{p_homogeneous} are a special case of \cref{p_inhomogeneous}, and, hence, we only prove \cref{p_inhomogeneous}.  We do so in \cref{s_inhomogeneous} by creating sub- and super-solutions using the heat equation and \cref{e_inhomogeneous_logistic_tvp}, with carefully chosen initial data for both.  Then we prove \cref{p_inhomogeneous_global} in \cref{s_global_in_time} through an eigenfunction decomposition.  This characterizes the location of the level sets in terms of the global in time problem \cref{e_logistic_global}.  In \cref{s_convergence}, we prove \cref{p_inhomogeneous_flatness}, showing that at large times $u$ locally looks like the global in time solution to \cref{e_logistic_global}.  Finally, we show that \cref{e_logistic_global} admits a unique global in time solution in \cref{s_wellposed}.

\subsubsection*{Notation}
In order to make the discussion clearer, we adopt the following notation.  The constant, $C$, denotes an arbitrary constant independent of time which may change line-by-line.  In addition, we occasionally use $o(1)$ to mean a constant that tends to zero along with some quantity which we will specify.  We will make clear, in each usage, the dependencies of $C$ and $o(1)$.

\section{Spreading of the level-average sets}\label{s_inhomogeneous}

In this section, we prove \cref{p_inhomogeneous} by creating sub- and super-solutions by decomposing $u$ into a product of purely diffusive and purely reactive terms.  We utilize the Feynman-Kac formula along with our condition on the oscillations, \cref{e_regular}, in order to control the diffusive term.

In addition, we state a lemma which will allow us to translate the simple condition on the decay of $u_0$, \cref{e_regular}, to a more useful condition on the oscillations of $u_0$ very far to the right.
\begin{lem}\label{p_oscillations}
Suppose that $u_0\in C^1$ is strictly decreasing and satisfies
\[
	\lim_{x\to\infty} \frac{(u_0)_x(x)}{u_0(x)/\log(u_0(x))} = 0.
\]
Let $\lambda_t$ be some quantity for which there exist constants $c_1, c_2>0$ such that $\lambda_t \leq c_1 e^{-c_2 t}$.  Then, for any choice of $c_3$, we have that
\[
	\lim_{t\to\infty} \frac{u_0\left( u_0^{-1}(\lambda_t) \pm c_3 t\right)}{\lambda_t} = 1.
\]
The limit may be taken uniformly for any $c_1$ which is bounded above.
\end{lem}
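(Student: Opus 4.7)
My plan is to reduce the lemma to a statement about the function $v(x) = -\log u_0(x)$. Since $u_0$ is strictly decreasing and tends to $0$, the function $v$ is strictly increasing and tends to $+\infty$, and a direct computation gives
\[
\frac{u_0'(x)}{u_0(x)/\log u_0(x)} = v(x)\,v'(x) = \tfrac{1}{2}\bigl(v^2\bigr)'(x).
\]
So the hypothesis is the same as $(v^2)'(x) \to 0$, which in integrated form says $v(x)^2 = o(x)$, i.e., $v(x) = o(\sqrt{x})$. Writing $x_t = u_0^{-1}(\lambda_t)$, the conclusion is then equivalent to $v(x_t \pm c_3 t) - v(x_t) \to 0$.

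The assumption $\lambda_t \leq c_1 e^{-c_2 t}$ gives the lower bound $v(x_t) \geq c_2 t - \log c_1$, which I would use in two ways. First, combined with $v = o(\sqrt{x})$, it shows $x_t \gtrsim t^2$, so $x_t - c_3 t \to \infty$ and the whole interval $[x_t - c_3 t, x_t + c_3 t]$ eventually sits beyond any prescribed threshold $X_\epsilon$ past which the pointwise estimate $v'(y) \leq \epsilon/v(y)$ applies. Second, integrating the bound $(v^2)'(x) \leq 2\epsilon$ over the interval (in reverse from $x_t$ if needed) yields
\[
v(y)^2 \geq v(x_t)^2 - 2\epsilon c_3 t \geq (c_2 t - \log c_1)^2 - 2\epsilon c_3 t \geq \tfrac{1}{4} c_2^2 t^2
\]
for $t$ sufficiently large, so $v(y) \geq \tfrac{1}{2} c_2 t$ throughout the interval, uniformly for $c_1$ bounded above.

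With these lower bounds in hand, the mean value theorem gives
\[
\bigl| v(x_t \pm c_3 t) - v(x_t) \bigr| \leq c_3 t \cdot \sup_{y} v'(y) \leq c_3 t \cdot \frac{\epsilon}{(c_2/2)t} = \frac{2 c_3 \epsilon}{c_2}.
\]
Letting first $t \to \infty$ and then $\epsilon \to 0$ gives $v(x_t \pm c_3 t) - v(x_t) \to 0$, and exponentiating returns $u_0(x_t \pm c_3 t)/\lambda_t \to 1$. The uniformity in $c_1 \leq C$ is automatic, since every estimate depends on $c_1$ only through $\log c_1$, which is uniformly bounded. The one step requiring any care is the $-c_3 t$ direction: there monotonicity of $v$ does not help bound it from below on the interval, which is precisely the role played by the integrated estimate on $(v^2)'$. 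Otherwise the argument is a straightforward exploitation of the logarithmic substitution, and I do not anticipate any serious obstacle.
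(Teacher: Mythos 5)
Your proof is correct, but it takes a genuinely different route from the paper's. The paper introduces the explicit comparison function $h_\epsilon(x) = e^{-\sqrt{\epsilon x}}$, which solves the ODE $h' = -\tfrac{\epsilon}{2}\,h/\log h$, shifts it so that it touches $u_0$ at $x_0 = u_0^{-1}(\lambda_t)$, applies an ODE comparison principle to sandwich $u_0$ between the level and the shifted $h_\epsilon$, and then explicitly expands $h(y_0 - c_3 t) - h(y_0)$ using Taylor's theorem on $\sqrt{1 - c_3 t/y_0}$. Your argument instead recasts everything through $v = -\log u_0$: the hypothesis becomes $(v^2)' \to 0$, the conclusion becomes $v(x_t \pm c_3 t) - v(x_t) \to 0$, and you get the result by integrating $(v^2)' \leq 2\epsilon$ once to lower-bound $v$ on the interval (and to push the interval past $X_\epsilon$), and then integrating $v' \leq \epsilon/v$ to bound the increment. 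The two proofs are closely related under the hood --- the paper's $h_\epsilon$ is exactly the function whose $v$-transform satisfies $(v^2)' = \epsilon$, so you are using the critical ODE implicitly as an inequality rather than explicitly as a barrier --- but your version is cleaner and more elementary, avoiding the construction of the auxiliary function, the translation bookkeeping, and the Taylor expansion of the square root. It also treats the $\pm c_3 t$ directions symmetrically in a single estimate, where the paper proves the $-c_3 t$ case and asserts the other is analogous. The uniformity-in-$c_1$ claim is handled correctly in both, through the $\log c_1$ term. No gaps.
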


\noindent We will use this with the choice $\lambda_t = B(m,t)$.  To this end, we need the following lemma that guarantees the exponential decay of $B(m,t)$ in time, which will be useful in its own right.

\begin{lem}\label{p_exponential}
Let $B(m,T)$ be defined as in \cref{p_inhomogeneous}.  Let $f_0$ be the largest eigenvalue of $\Delta + f_u(x,0)$.  Then $f_0>0$ and there exists positive constants $C$, independent of $m$, and $C_m$, depending on $m$, such that
\begin{equation}\label{e_exponential_1}
	\frac{m}{C} e^{-f_0 T} \leq B(m,T) \leq C_m e^{-f_0 T}.
\end{equation}
Moreover, there exists $\delta_0>0$ such that if $m \leq \delta_0$, then
\begin{equation}\label{e_exponential_2}
	B(m,T) \leq Cm e^{-f_0 T}.
\end{equation}
\end{lem}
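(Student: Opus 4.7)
The plan is threefold: prove $f_0 > 0$ via the Rayleigh quotient, deduce the lower bound from the linearization at $u = 0$ as a supersolution, and obtain the upper bound by a perturbative subsolution quantifying the gap between the nonlinear evolution and its linearization. Positivity of $f_0$ is immediate from the Rayleigh characterization $f_0 = \max\{-\int_\T (\psi')^2\,dx + \int_\T f_u(x,0)\psi^2\,dx : \|\psi\|_{L^2(\T)} = 1\}$: testing with $\psi \equiv L^{-1/2}$ yields $f_0 \geq L^{-1}\int_\T f_u(x,0)\,dx > 0$ by \eqref{e_f}.

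For the lower bound, the KPP inequality $f(x,u) \leq f_u(x,0) u$ makes the linear evolution $w$ solving $w_t = w_{xx} + f_u(x,0) w$ with $w(0) \equiv B(m,T)$ a supersolution to $\phi^{T,m}$, so $\phi^{T,m}(T,\cdot) \leq w(T,\cdot)$. Expanding the constant function $\mathbf{1}$ in the orthonormal eigenbasis $\{\psi_n\}_{n\geq 0}$ associated with eigenvalues $f_0 > f_1 \geq f_2 \geq \cdots$, I would compute $\fint_\T w(T,x)\,dx = L^{-1} B(m,T) \sum_n (\int_\T \psi_n)^2 e^{f_n T}$, which is bounded above by $B(m,T) e^{f_0 T}$ using $f_n \leq f_0$ together with Parseval's identity $\sum_n (\int_\T \psi_n)^2 = \|\mathbf{1}\|_{L^2(\T)}^2 = L$. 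Since this quantity dominates the mean of $\phi^{T,m}(T,\cdot)$, which equals $m$, I conclude $B(m,T) \geq m e^{-f_0 T}$.

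The upper bound is the delicate step. I plan to construct a subsolution by quantifying how close the nonlinear periodic evolution is to its linearization. Letting $v$ solve the nonlinear periodic problem with $v(0) \equiv A$ and $w$ the corresponding linear solution, the error $e := w - v \geq 0$ satisfies $e_t = e_{xx} + f_u(x,0) e + g(x,v)$ with source $g(x,v) := f_u(x,0) v - f(x,v) \in [0, K v^{1+\delta}]$ by the $C^{1,\delta}$-regularity of $f$ in $u$ near $u = 0$. Combining $\|w(t)\|_\infty \leq C A e^{f_0 t}$ with Duhamel's formula and the semigroup bound $\|S(t-s)h\|_\infty \leq C' e^{f_0(t-s)} \|h\|_\infty$ produces $\|e(t)\|_\infty \leq C'' A^{1+\delta} e^{(1+\delta) f_0 t}$. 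Since $w(t,x) \geq c A e^{f_0 t} \psi_0(x)$ for $t \geq 1$ from the dominant-eigenfunction expansion, this error is negligible compared to $w$ as long as $A e^{f_0 t} \leq \eta_0$ for some fixed small $\eta_0$. Choosing $A = (2mL/(c \int_\T \psi_0)) e^{-f_0 T}$, the smallness condition holds up to time $T$ whenever $m \leq \delta_0 := c \eta_0 (\int_\T \psi_0)/(2L)$, the mean of $v(T)$ then exceeds $m$, and monotonicity of $B$ in the initial datum yields \eqref{e_exponential_2}: $B(m,T) \leq C m e^{-f_0 T}$.

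Finally, to extend to arbitrary $m \in (0,1)$, I would use the standard fact that the periodic Fisher-KPP problem with any positive constant initial datum converges uniformly to $1$; thus there is a finite $\tau_m$, depending only on $m$ and $\delta_0$, such that starting from $\phi^{T-\tau_m,\delta_0}(T-\tau_m,\cdot)$ at time $T-\tau_m$ yields mean at least $m$ at time $T$. Combined with the bound already proved for $\delta_0$, this gives $B(m,T) \leq B(\delta_0, T-\tau_m) \leq C \delta_0 e^{f_0 \tau_m} e^{-f_0 T} =: C_m e^{-f_0 T}$ for large $T$, and the trivial bound $B(m,T) \leq 1$ handles bounded $T$ after enlarging $C_m$. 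The main obstacle will be the perturbative estimate: I must verify that the correction $e$ stays subleading on the entire interval $[0,T]$, which is what ultimately locks in the $m$-linear scaling in \eqref{e_exponential_2} for small $m$.
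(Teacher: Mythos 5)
Your proposal is correct, and while it shares the paper's overall architecture (Rayleigh quotient for $f_0>0$; linearization at $0$ as a supersolution for the lower bound; a quantified nonlinear correction for the small-$m$ upper bound; a finite time shift to reach general $m$), two of the three main steps are executed by genuinely different means. For the lower bound, the paper compares $\phi^{T,m}$ with the single-mode supersolution $\alpha\psi_0(x)e^{f_0 t}$, $\alpha = B(m,T)/\min\psi_0$, which costs a factor $\|\psi_0\|_\infty/\min\psi_0$; your full spectral expansion of the constant datum together with Parseval gives the cleaner $B(m,T)\ge m e^{-f_0T}$, at the price of invoking the whole eigenbasis rather than one comparison function. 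For the small-$m$ upper bound, the paper builds an explicit modulated subsolution $\underline{\phi}=\alpha(t)\psi_0 e^{f_0t}$ with $\alpha$ solving a Bernoulli ODE driven by the bound $f(x,s)\ge f_u(x,0)s-Ms^{1+\delta}$, whereas you run a Duhamel perturbation of the linear semigroup, using $0\le w-v$ and $\|S(\tau)h\|_\infty\le Ce^{f_0\tau}\|h\|_\infty$ (which indeed follows from comparison with $\psi_0/\min\psi_0$) to show the correction is $O\bigl(A^{1+\delta}e^{(1+\delta)f_0t}\bigr)$ and hence subleading while $Ae^{f_0t}\le\eta_0$; both yield the same $m$-linear scaling, the paper's being fully explicit and yours more modular. (Your route does need $T$ bounded below so that the dominant-mode lower bound $w(T,\cdot)\gtrsim Ae^{f_0T}\psi_0$ kicks in, but bounded $T$ is trivial since $B(m,T)\le m$.) Finally, for general $m$ the paper iterates a halving step $B(m,T)\le B(m/2,T-R_1)$ finitely many times, while you make a single shift using the uniform convergence to $1$ of the periodic problem; your version implicitly needs a Harnack-type uniform pointwise lower bound on profiles of mean $\delta_0$ so that $\tau_m$ is independent of $T$, which holds but should be said. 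Net effect: same quantitative conclusion, slightly sharper constant in the lower bound, and a softer but less self-contained small-$m$ argument.
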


\noindent We prove these lemmas in \cref{ss_lemmas}.

\subsection{An upper bound}\label{ss_upper_bound_inhomogeneous}

To begin, we define
\begin{equation}\label{e_xmt}
	x_m(T) = u_0^{-1}(B(m,T)),
\end{equation}
and we define a super-solution as $\overline{v} = \phi^{T,m} \overline{w}$ where $\phi^{T,m}$ satisfies \cref{e_inhomogeneous_logistic_tvp} and $\overline{w}$ satisfies
\[\begin{cases}
	\overline{w}_t = \overline{w}_{xx} + 2\left(\frac{\phi_x^{T,m}}{\phi^{T,m}}\right) \overline{w}_x,\\
	\overline{w}(0,x) = \max\left\{ \frac{u_0(x)}{B(m,T)}, 1\right\}.
\end{cases}\]
Indeed, we notice that $\overline{v}(0,x) \geq u_0(x)$ for every $x$ and that
\begin{equation}\label{e_supersolution}
\begin{split}
	\overline{v}_t - \overline{v}_{xx} - f(x,\overline{v})
		&= \phi^{T,m}_t \overline{w} + \phi^{T,m} \overline{w}_t - \phi^{T,m}_{xx}\overline{w} - 2 \phi_x^{T,m} \overline{w}_x - \phi^{T,m} \overline{w}_{xx} - f(x,\phi^{T,m}\overline{w})\\
		&= \left[\phi^{T,m}_{xx}  + f(x,\phi^{T,m})\right] \overline{w} + \phi^{T,m} \left[\overline{w}_{xx} + 2\frac{\phi_x^{T,m}}{\phi^{T,m}}\overline{w}_x\right] - \phi^{T,m}_{xx}\overline{w} - 2 \phi_x^{T,m} \overline{w}_x \\
			&~~~~ - \phi^{T,m} \overline{w}_{xx} - f(x,\phi^{T,m}\overline{w})\\
		&= \overline{w}\phi^{T,m}\left[\frac{f(x,\phi^{T,m})}{\phi^{T,m}} - \frac{f(x,\phi^{T,m}\overline{w})}{\overline{w} \phi^{T,m}}\right] \geq 0
.\end{split}
\end{equation}
The inequality in the last line follows from the fact that $\overline{w} \geq 1$, by the maximum principle, and from the fact that $f(x,s)/s$ is decreasing in $s$ by \cref{e_f}.

As a result of the above decomposition, we need only show that $\overline{w}$ does not deviate much from $1$ near $x_m(T)$.  To this end, we will use the Feynman-Kac representation of $\overline{w}$, see e.g. \cite{Bass}.  First, it follows from the standard parabolic regularity theory that
\begin{equation}\label{e_advection}
	\|\phi^{T,m}_x/\phi^{T,m}\|_\infty \leq C_0,
\end{equation}
where $C_0$ is a constant independent of $t$, $T$, and $m$, see e.g.~\cite{KrylovHolder,KrylovSobolev}.  We define a random process $X_t$ as the solution to
\begin{equation}\label{e_sde}
\begin{cases}
	dX_t = 2 \frac{\phi^{T,m}_x(T - t, X_t)}{\phi^{T,m}(T-t,X_t)}\ dt + \sqrt{2}\ dB_t,\\
	X_0 = x,
\end{cases}
\end{equation}
where $B_t$ is a standard Brownian motion.  We may then represent $w$ as
\[
	\overline{w}(t,x)
		= \E^x \left[\overline{w}(0,X_t)\right]
.\]
Fix $r_1>0$, to be determined later, and any $r_2$.  For $x \geq x_m(T) - r_2T$, we estimate this at time $T$ as
\begin{equation}\label{e_decomposition_upper}
	\overline{w}(T,x)
		= \E^x \left[\overline{w}(0,X_T)\1_{\{X_T \leq x - r_1T\}}\right] + \E^x \left[(\overline{w}(0,X_T) - 1)\1_{\{X_t \geq x - r_1T\}}\right] + \E^x \left[\1_{\{X_t \geq x - r_1t\}}\right].
\end{equation}
First, we may use \cref{p_exponential} to find a constant $C$ with which we may bound $\|\overline{w}(0)\|_\infty \leq C e^{ f_0 T}$.  Then, if $r > 2C_0 + 2\sqrt{f_0}$, with $C_0$ as in \cref{e_advection}, the first term in \cref{e_decomposition_upper} may be bounded as
\[\begin{split}
	\E^x \left[\overline{w}(0,X_T)\1_{\{X_T \leq x - rT\}}\right]
		&\leq C e^{f_0T} \PP\left\{ X_T \leq x - rT\right\}
		\leq C e^{f_0T} \PP\left\{ \sqrt{2}B_T - 2C_0 T \leq - rT\right\}\\
		&\leq C e^{f_0T} \PP\left\{ \sqrt{2} B_T \leq - 2\sqrt{f_0} T \right\}
		\leq \frac{C}{\sqrt{T}}
.\end{split}\]
The last inequality follows from a standard estimate on the tail of a Gaussian.  Thus, the first term tends to zero as $T$ tends to infinity.  The third term in \cref{e_decomposition_upper} can be shown to converge to $1$ in the exact same manner as above.  Notice that these limits did not depend on $m$.

Hence, we need only show that the second term converges to zero.  To this end we bound the second term as
\[\begin{split}
	\E^x \left[(\overline{w}(0,X_T) - 1)\1_{\{X_t \geq x - r_1T\}}\right]
		&\leq \E^x \left[\frac{u_0(x - r_1T) - u_0(x_m(T))}{B(m,T)} \right]\\
		&\leq \E^x \left[\frac{u_0(x_m(T) - (r_1 + r_2)T) - u_0(x_m(T))}{B(m,T)} \right].
\end{split}\]
Using the definition of $x_m(T)$, \cref{e_xmt}, this tends to zero as a consequence of \cref{p_oscillations} and \cref{p_exponential}.  We notice that this limit can be taken independent of $m$ if $m$ is bounded away from $1$ as a consequence of the tracking the dependence on $m$ through \cref{p_oscillations} and \cref{p_exponential}.

Hence we obtain that if $x\geq x_m(T) - r_2T$, then $\overline{w}(T,x) \leq 1 + o(1)$, where $o(1)$ tends to zero as $T$ tends to infinity.  This yields
\begin{equation}\label{e_upper_bound}
	u(T,y)
		\leq \phi^{T,m}(T,y) \overline{w}(T,y)
		\leq \phi^{T,m}(T,y) [1 + o(1)]
.\end{equation}
By construction, $\fint \phi^{T,m}(T,y)dy = m$.  Hence, averaging over $[x,x+L]$ gives us the upper bound \cref{e_mean_upper_bound}.

We note that, to take the limits above, we used only estimates on the Gaussian and \cref{p_oscillations} and \cref{p_exponential}.  Hence, these limits can be taken independent of $m$ as long as $m$ is uniformly bounded away from $0$ and $1$.

\subsection{A lower bound}\label{ss_lower_bound_inhomogeneous}

The argument here is similar to the argument in \cref{ss_upper_bound_inhomogeneous}, above.  As before we fix $T>0$ and $m\in(0,1)$, define $x_m(T)$ as in \cref{e_xmt}, and we $\underline{v}(t,x) = \phi^{T,m}(t,x)\underline{w}(t,x)$ where $\phi^{T,m}(t,x)$ is defined in \cref{e_inhomogeneous_logistic_tvp}.  Here, we instead define $\underline{w}$ as the solution to
\[\begin{cases}
	\underline{w}_t = \underline{w}_{xx} + 2\frac{\phi^{T,m}_x}{\phi^{T,m}}\underline{w}_x,\\
	\underline{w}(0,x) = \min\left\{ \frac{u_0(x)}{B(m,T)}, 1\right\}.
\end{cases}\]
Notice that $\underline{v}(0,x) \leq u_0(x)$ for all $x$.  The computation above, \cref{e_supersolution}, shows that
\[
	\underline{v}_t - \underline{v}_{xx} - f(x,\underline{v})
		= \underline{w}\phi^{T,m}\left[\frac{f(x,\phi^{T,m})}{\phi^{T,m}} - \frac{f(x,\phi^{T,m}\underline{w})}{\underline{w} \phi^{T,m}}\right] \leq 0,
\]
where the last inequality follows since $\underline{w}\leq 1$, as a consequence of the maximum principle, and since $f(x,s)/s$ is decreasing for $s\in [0,1]$.

Again, defining $X_t$ by \cref{e_sde}, we obtain that
\[
	\underline{w}(t,x) = \E^x\left[ \underline{w}(0, X_t) \right].
\]
Fix $r_1>0$, to be determined below, and any $r_2$.  For any $x \leq x_m(T) + r_2 T$, we decompose $w(T,x)$ as
\[
	\underline{w}(T,x)
		= \E^x \left[\underline{w}(0,X_T)\1_{\{X_T \geq x + r_1T\}}\right] + \E^x \left[(\underline{w}(0,X_T) - 1)\1_{\{X_t \leq x + r_1T\}}\right] + \E^x \left[\1_{\{X_t \leq x + r_1T\}}\right].
\]
For $x\leq x_m(T) + r_2T$, we may again choose $r_1$ large enough that the first term on the right hand side tends to zero as $T$ tends to infinity and that the last term on the right tends to one as $T$ tends to infinity.  In addition, \cref{p_oscillations} and \cref{p_exponential} imply that the second term on the right tends to zero as $T$ tends to infinity.  Hence, we obtain, for any $x \leq x_m(T) + r_2T$,
\begin{equation}\label{e_lower_bound}
	u(T,x)
		\geq \phi^{T,m}(T,x) \underline{w}(T,x)
		\geq \phi^{T,m}(T,x) (1 - o(1))
.\end{equation}
Using that, by construction, $\fint \phi^{T,m}(T,y)dy = m$, integrating over $[x-L, x]$ yields the lower bound in \cref{e_mean_lower_bound} for any $x \leq x_m(T) + r_2 T$.  Again, we note that the limit in \cref{e_lower_bound} may be taken uniformly for $m$ bounded away from $1$.

\subsection{Controlling the oscillations}\label{ss_lemmas}

In this section we prove \cref{p_oscillations} and \cref{p_exponential}.  We first prove \cref{p_oscillations}, by using an ODE argument to compare $u_0$ to a sequence of equations which decay at the critical order but with increasingly slower rates.

\begin{proof}[Proof of \cref{p_oscillations}]
We will prove this lemma for $u_0^{-1}(\lambda_t) - c_3 t$, though the proof is exactly analogous in the case of $u_0^{-1}(\lambda_t) + c_3 t$.  Fix $\epsilon > 0$, and define $h_\epsilon(x) = e^{-\sqrt{\epsilon x}}$.  Notice that
\begin{equation}\label{e_h_ode}
	h' = -\frac{\epsilon}{2} \frac{h}{\log(h)}
.\end{equation}
Define 
\[
	x_0 = u_0^{-1}(\lambda_t)
	~~~\text{ and } ~~~
	y_0 = h^{-1}(\lambda_t)
.\]
We may then shift $h$ to obtain $g(x) = h(x - x_0 + y_0)$.  Notice that $g(x_0) = u_0(x_0)$.  The condition on the decay of $\lambda_t$ and the fact that $u_0$ decays slower than any exponential implies that $u_0^{-1}(\lambda_t) - c_3 t$ tends to infinity.  Hence we may choose $t$ large enough, uniformly in $c_1$ if $c_1$ is bounded above, such that if $x\geq x_0 - c_3 t$, then
\begin{equation}\label{e_u_0_ode}
	0 \geq u_0'(x) \geq -\frac{\epsilon}{2} \left|\frac{u_0(x)}{\log(u_0(x))}\right|.
\end{equation}
We note that $u_0$ is a supersolution to \cref{e_h_ode} as a consequence of \cref{e_u_0_ode}.  Hence $u_0(x) \leq g(x)$ for all $x_0 - c_3 t \leq x \leq x_0$.  Using this, along with the fact that $u_0(x_0) = g(x_0)$, we obtain
\begin{equation}\label{e_ordering}
	0
		\leq u_0(x_0 - c_3 t) - u_0(x_0)
		\leq u_0(x_0 - c_3 t) - g(x_0)
		\leq g(x_0 - c_3 t) - g(x_0).
\end{equation}
To conclude, we estimate the last term, $g(x_0 - c_3 t) - g(x_0)$.  Returning to our function $h$ and manipulating the equation, we obtain
\[\begin{split}
	g(x_0 - c_3 t) - g(x_0)
		&= h(y_0 - c_3 t) - h(y_0)\\
		&= \exp\{ -\sqrt{\epsilon(y_0 - c_3 t)}\} - \exp\{-\sqrt{\epsilon y_0}\}\\
		&= \exp\{ - \sqrt{\epsilon y_0}\} \left[\exp\left\{ \sqrt{\epsilon y_0} \left(1 - \sqrt{1 - (c_3t/y_0)}\right)\right\} - 1\right]
.\end{split}\]
It is a simple consequence of Taylor's theorem that there is a constant $C$ such that
\[
	1 - \sqrt{1 - (c_3 t/y_0)}\leq C( c_3 t / y_0)
.\]
Applying this in the equation above, and using that, by construction, $e^{\sqrt{\epsilon y_0}} = \lambda_t$, we obtain
\[
	g(x_0 - c_1 t) - g(x_0) \leq \lambda_t \left[\exp\left\{ \frac{ C\sqrt{\epsilon} c_3 t}{\sqrt{y_0}}\right\} - 1 \right].
\]
Then using that $\lambda_t \leq c_1 e^{- c_2 t}$, we may solve for $y_0$ to obtain
\[
	y_0 \geq \frac{1}{\epsilon^2} \left(c_2 t - \log(c_1)\right)^2.
\]
Returning to \cref{e_ordering}, and using that $u_0(x_0) = \lambda_t$, by construction, we obtain
\[
	0 \leq u_0(x_0 - c_3 t) - \lambda_t \leq g(x_0 - c_3 t) - g(x_0) \leq \lambda_t \left[\exp\left\{\epsilon  \frac{ C c_3 t}{c_2 t - \log(c_1)}\right\} - 1 \right].
\]
Adding $\lambda_t$ to to all sides of each inequality and then dividing by $\lambda_t$ yields
\[
	1 \leq \frac{u_0(x_0 - c_3 t)}{\lambda_t} \leq \exp\left\{ C' \epsilon\right\},
\]
where $C'$ is a universal constant which bounds $C c_3 t / (c_2 t - \log(c_1))$.  Again, $C'$ can be chosen uniformly in $c_1$ as long as $c_1$ is bounded above.  The result then follows by using the fact that we can choose $\epsilon$ as small as we would like by taking $t$ larger.
\end{proof}

We will now prove \cref{p_exponential} in order to make \cref{p_oscillations} applicable.  The idea of the proof is to use the linearized equation to control the growth of $B(m,T)$.

\begin{proof}[Proof of \cref{p_exponential}]
First, we verify that $f_0$ is positive.  To see this, we simply use the Rayleigh quotient characterization of $f_0$ as
\[
	f_0 = \max_{\psi\in H^1(\mathbb{T})} \frac{\int \left(f_u(x,0) \psi(x)^2 - |\nabla \psi(x)|^2\right)dx}{\int_{\mathbb{T}} \psi(x)^2 dx},
\]
where $\mathbb{T} = [0,L]$ with the ends identified.  One can verify the positivity of $f_0$ by testing the equation with the function $\psi \equiv 1$ and using the positivity of $f_u(x,0)$.

Fix $T>0$, and let $\psi_0$ be the unique eigenfunction associated with $f_0$ with $\|\psi_0\|_2 = 1$.  It follows from the Krein-Rutman theorem that this function exists and is positive~\cite{DAUTRAY LIONS}.  Notice that $\overline{\phi}(t,x) = \alpha \psi_0(x) e^{f_0 t}$ satisfies
\[
	\overline{\phi}_t
		= \Delta \overline{\phi} + f_u(x,0)\overline{\phi}
		\geq \Delta \overline{\phi} + f(x,\overline{\phi}).
\]
Hence, if $\alpha \psi_0(x) \geq B(m,T)$, we have that $\overline{\phi}(t,x) \geq \phi^{T,m}(t,x)$ for all $x$ and all $t$, where $\phi^{T,m}$ is defined by \cref{e_inhomogeneous_logistic_tvp}.  Choosing
\[
	\alpha = \frac{B(m,T)}{\min_x \psi_0(x)},
\]
this implies that
\[
	B(m,T) \frac{\|\psi_0\|_\infty}{\min_x \psi_0(x)} e^{f_0 T}
		= \alpha \|\psi_0\|_\infty e^{f_0 T}
		\geq \fint \overline{\phi}(T,x)dx
		\geq \fint \phi^{T,m}(T,x)dx
		= m
.\]
Rearranging this inequality yields the lower bound on $B(m,T)$.

The upper bound is similar, but requires slightly more work.  We first obtain a bound for small $m$ by using a modulation technique found in \cite{HNRR12,HNRR13}.  Notice that our conditions on $f$ guarantee the existence of $M$ such that
\begin{equation}\label{e_concave}
	f(x,s) \geq f_u(x,0)s - M s^{1+\delta},
\end{equation}
holds for all $s \in [0,1]$.  Define $\underline{\phi}(t,x) = \alpha(t) \psi_0(x) e^{f_0 t}$.  We find conditions on $\alpha$ to make $\underline{\phi}$ a sub-solution of $\phi$.  To this end, we compute
\[\begin{split}
	\underline{\phi}_t - \underline{\phi}_{xx} - f(x,\underline{\phi})
		&= \frac{\alpha'}{\alpha} \underline{\phi} + f_0 \underline{\phi} - \alpha (\psi_0)_{xx} e^{f_0 t} - f(x,\underline{\phi})\\
		&=\frac{\alpha'}{\alpha} \underline{\phi} + f_0 \underline{\phi} - \alpha [f_0 \psi_0 - f_u(x,0)\psi_0] e^{f_0 t} - f(x,\underline{\phi})\\
		&= \frac{\alpha'}{\alpha}\underline\phi + f_0 \underline\phi - f_0 \underline\phi + f_u(x,0) \underline\phi - f(x,\underline\phi)\\
		&\leq \frac{\alpha'}{\alpha}\underline\phi + M \underline\phi^{1+\delta}
\end{split}\]
Hence $\underline\phi$ is a sub-solution if we choose
\[
	\frac{\alpha'}{\alpha^{1+\delta}} = - M e^{f_0 \delta t} \|\psi_0\|_\infty^\delta ~~~ \text{ and } ~~~ \alpha(0) = \frac{B(m,T)}{\|\psi_0\|_\infty}.
\]
In other words, we may let
\[
	\alpha(t) = \frac{B(m,T)}{\|\psi_0\|_\infty} \left[\frac{f_0 \delta}{ B(m,T)^\delta M [ e^{f_0 \delta t} - 1] + f_0 \delta} \right]^{1/\delta}
\]
Hence at time $t = T$, we may average $\underline\phi$ and use that $\underline\phi$ sits below $\phi^{T,m}$ to obtain
\[
	\frac{B(m,T)}{\|\psi_0\|_\infty} \left[ \frac{f_0 \delta}{B(m,T)^\delta M[e^{f_0 \delta T} - 1] + f_0 \delta} \right]^{1/\delta} e^{f_0 T} = \fint \underline\phi(T,x) dx \leq \fint \phi^{T,m}(T,x)dx = m.
\]
Re-arranging this we see that
\[
	f_0 \delta B(m,T)^{\delta} e^{f_0 \delta T} \leq m^\delta \|\psi_0\|_\infty^\delta \left[ B(m,T)^\delta e^{f_0 \delta T} M + f_0 \delta\right].
\]
If $m^\delta M \|\psi_0\|_\infty^\delta  \leq f_0 \delta/2$, then we may rearrange the above to obtain
\begin{equation}\label{e_bmt_upper}
	B(m,T)^{\delta} \leq 2m^\delta \|\psi_0\|_\infty^\delta  e^{-f_0 \delta T},
\end{equation}
finishing the claim for small $m$.

The argument is somewhat more complicated for general $m$.  We prove it in two steps.  First, we show that there is $R_1$, which is uniformly bounded if $m$ is uniformly bounded away from $0$ and $1$, such that $B(m/2, T - R_1) \geq B(m, T)$.  To see this, we take $C_{f,m}$ to be a positive constant, arising from parabolic regularity and the smoothness and positivity of $f$, such that $f(x, \phi^{T,m}(t,x)) \geq C_{f,m}$ if the average of $\phi$ is between $m/2$ and $m$.  Then we choose $R_1 = m / (2 C_{f,m})$ and we will show by contradiction that this is the correct choice of $R_1$.  If not, and instead $B(m/2, T-R_1) < B(m,T)$ holds, then the maximum principle gives that
\begin{equation}\label{e_mean_bound}
	\frac{m}{2}
		= \fint \phi^{T-R_1, m/2}(T-R_1,x)dx
		< \fint \phi^{T,m}(T-R_1, x) dx.
\end{equation}
Using our choice of $C_{f,m}$ above, we may integrate the equation for $\phi^{T,m}$ in space for any $t \in [T-R_1, T]$ to obtain
\[
	\partial_t \fint \phi^{T,m}(t,x) dx = \fint f(x,\phi^{T,m}) dx \geq C_{f,m}.
\]
Integrating this in time from $T-R_1$ to $T$, using the bound in \cref{e_mean_bound}, and using our choice of $R_1$ yields
\[
	m = \fint \phi^{T,m}(T,x) dx
		\geq C_{f,m} R_1 + \fint \phi^{T,m}(T-R_1,x) dx
		> \frac{m}{2} + \frac{m}{2},
\]
which is clearly a contradiction.

Having proven this fact, we now bootstrap the result for small $m$ to obtain the statement for general $m$.  For any $m$, we may find $N$ such that $m/ 2^N$ is small enough to apply the work above and obtain \cref{e_bmt_upper}.  Indeed, we choose $N$ large enough that
\[
	\left(\frac{m}{2^N}\right)^\delta M \|\psi_0\|_\infty^\delta  < f_0 \delta/2,
\]
and as a result obtain that, for any $\tilde T> 0$,
\begin{equation}\label{e_bmt_upper2}
	B\left(\frac{m}{2^N}, \tilde T\right) \leq C \left( \frac{m}{2^N}\right) e^{-f_0 \tilde T}.
\end{equation}
In addition to this, we may iterate the procedure above to find $R_1, R_2, \dots, R_N$ such that
\[
	B\left( \frac{m}{2^i}, T - (R_1 + \cdots + R_i)\right)
		\leq B\left( \frac{m}{2^{i+1}}, T - (R_1 + \cdots + R_{i+1})\right)
\]
holds for any $i$, and hence,
\begin{equation}\label{e_iterate}
	B(m, T)
		\leq B\left(\frac{m}{2^N}, T - (R_1 + \cdots R_N)\right)
.\end{equation}
Combining \cref{e_bmt_upper2,e_iterate} and defining $C_m = C m 2^{-N} e^{-f_0(R_1 + \cdots R_N)}$, we obtain
\[
	B(m,T)
		\leq C \frac{m}{2^N} e^{-f_0 (T - R_1 - R_2 - \cdots - R_N)}
		= C_m e^{-f_0 T},
\]
where we have defined $C_m = C m 2^{-N} e^{f_0(R_1 + \cdots+R_N)}$.  This concludes the proof.

\end{proof}

\section{Characterization of the speed}\label{s_global_in_time}

In this section, we will show how to characterize $B(m,T)$ in terms of the the global-in-time solution $\varphi$ of \cref{e_logistic_global}.  In the homogeneous case, there is nothing to prove since $\varphi$ and $\varphi^{T,m}$ are equal up to a translation in time, by the uniqueness of solutions of ordinary differential equations.  In the inhomogeneous setting, we need to deal with the fact that $\varphi$ is not necessarily flat, while $\phi^{T,m}(0,x) \equiv B(m,T)$.  The idea of the proof is to run the system for time $T/2$ and use spectral estimates to show that the $\varphi(T_m - T/2 + t)$ and $\phi^{T,m}(T/2 + t)$ will be close.  From then on, they must remain close.

Before we begin, we will need one fact about the global in time solution.  We delay the proof of this lemma until \cref{s_wellposed} as it will be crucial in the proof of the well-posedness of \cref{e_logistic_global}.
\begin{lem}\label{p_exponential_global}
Let $\varphi$ be any solution of \cref{e_logistic_global}.  Then there exist positive constants $\alpha$ and $\omega$ such that
\[
	\lim_{t\to-\infty} \frac{\varphi(t,x)}{\psi_0(x) e^{f_0 t}} = \alpha,
		~~\text{ and }~~
		\lim_{t\to\infty} \frac{1 - \varphi(t,x)}{\psi_1(x) e^{-f_1 t}} = \omega
\]
where $\psi_0$ and $f_0$, defined in \cref{p_inhomogeneous_global}, are the normalized eigenfunction and principle eigenvalue of $\Delta + f_u(x,0)$, and where $\psi_1$ and $f_1$ are the normalized eigenfunction and principle eigenvalue of $\Delta - f_u(x,1)$.
\end{lem}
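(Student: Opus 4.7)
The plan is to handle the two asymptotic regimes symmetrically by linearizing around the equilibria $0$ and $1$. I focus on the $t\to-\infty$ limit; the $t\to+\infty$ case follows by applying the same argument to $v=1-\varphi$, whose linearization around $0$ reads $v_t=v_{xx}+f_u(x,1)v+O(v^{1+\delta})$ and whose natural decay rate is the principal eigenvalue $f_1$ of $\Delta-f_u(x,1)$. First I would show that $\sup_x\varphi(t,x)\to 0$ as $t\to-\infty$. Integrating \eqref{e_logistic_global} in $x$ gives $\partial_t\fint\varphi\,dx=\fint f(x,\varphi)\,dx\geq 0$, so the spatial mean is monotone increasing and admits a limit $m_-\in[0,1/2]$ as $t\to-\infty$. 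If $m_->0$, uniform parabolic regularity on the compact torus lets me extract a time-shift subsequential limit, which would be a stationary solution of mean $m_-$; assumption \eqref{e_f} rules out stationary solutions other than $0$ and $1$, a contradiction. Harnack's inequality on the torus then upgrades mean convergence to uniform convergence.

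Next, I would sandwich $\varphi$ between two multiples of $\psi_0(x)e^{f_0 t}$ for $t$ sufficiently negative. The upper bound uses the first inequality in \eqref{e_f} to make $\alpha\psi_0 e^{f_0 t}$ a supersolution: once $\sup_x\varphi(t_0)$ is small, one picks $\alpha$ so that $\alpha\psi_0 e^{f_0 t_0}\geq\varphi(t_0)$ and the comparison principle extends the bound to all $t\geq t_0$. The lower bound uses the concavity estimate $f(x,u)\geq f_u(x,0)u-Mu^{1+\delta}$ from \eqref{e_concave} together with a modulated subsolution $\alpha(t)\psi_0(x)e^{f_0 t}$ in the style of the proof of \cref{p_exponential}. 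With the ratio $w(t,x)=\varphi(t,x)/(\psi_0(x)e^{f_0 t})$ then uniformly bounded above and below, a direct computation using $(\psi_0)_{xx}=(f_0-f_u(x,0))\psi_0$ yields
\[
	w_t = w_{xx} + 2(\log\psi_0)_x w_x + w\,R(t,x),
\]
where $R(t,x)=f(x,\varphi)/\varphi - f_u(x,0)$ satisfies $|R|\leq C\varphi^\delta\leq C'e^{\delta f_0 t}$ by the $C^{1,\delta}$ regularity of $f$ near $0$.

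The drift-Laplace operator $L=\partial_{xx}+2(\log\psi_0)_x\partial_x$ is self-adjoint in $L^2(\psi_0^2\,dx)$, with kernel spanned by constants and a positive spectral gap $\lambda_1$. Writing $w=W(t)+w^\perp(t,x)$ with $W(t)=\int w\,\psi_0^2\,dx$, I would integrate the $w$-equation against $\psi_0^2\,dx$ to obtain $|W'(t)|\leq Ce^{\delta f_0 t}$, so $W(t)\to\alpha$ as $t\to-\infty$; the lower sandwich bound forces $\alpha>0$. Duhamel's formula for $w^\perp$ combined with the spectral estimate $\|e^{Ls}u\|_{L^2(\psi_0^2)}\leq e^{-\lambda_1 s}\|u\|$ on the orthogonal complement of constants gives
\[
	\|w^\perp(t)\|_{L^2(\psi_0^2)} \leq e^{-\lambda_1(t-t_0)}\|w^\perp(t_0)\| + C\int_{t_0}^{t} e^{-\lambda_1(t-s)}e^{\delta f_0 s}\,ds,
\]
and sending $t_0\to-\infty$ (using boundedness of $w^\perp$) forces $\|w^\perp(t)\|\to 0$. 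Parabolic regularity upgrades $L^2$ convergence to uniform convergence, yielding the first claim.

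The main obstacle is the lower sandwich bound on $\varphi$. Because one cannot run the comparison principle backward in time, the subsolution has to be built forward from an arbitrarily negative time $t_0$ with bounds uniform as $t_0\to-\infty$. The concavity estimate \eqref{e_concave}, the positivity of $\psi_0$ from Krein-Rutman, and the ODE manipulation used in \cref{p_exponential} are precisely what close this step, ensuring that the modulated prefactor $\alpha(t)$ stays bounded away from zero as $t_0$ recedes.
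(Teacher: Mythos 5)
Your architecture matches the paper's: reduce to $t\to-\infty$, establish two-sided bounds $0<c\leq e^{-f_0t}\varphi(t,x)/\psi_0(x)\leq C<\infty$, then run a spectral decomposition with Gronwall/Duhamel to show the $\psi_0$-component converges and the orthogonal component decays. The spectral step is essentially the paper's argument (the paper projects $\Phi=e^{-f_0t}\varphi$ onto $\psi_0$ in unweighted $L^2$; your weighted formulation with $w=\varphi/(\psi_0e^{f_0t})$ is equivalent), and your preliminary observation that $\varphi(t)\to0$ as $t\to-\infty$ is a reasonable addition.

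The gap is in the sandwich bounds, above all the upper one. Picking $t_0$ very negative, choosing $\alpha$ with $\alpha\psi_0e^{f_0t_0}\geq\varphi(t_0)$ and comparing forward only propagates to later times whatever bound already holds at $t_0$; it says nothing about whether $A(t_0):=\sup_x\varphi(t_0,x)/(\psi_0(x)e^{f_0t_0})$ stays bounded as $t_0\to-\infty$, which is precisely what the lemma requires. Nor can the modulated subsolution of \cref{p_exponential} rule out $A(t_n)\to\infty$ by contradiction: run over the long interval $[t_n,0]$, its amplitude saturates at a fixed level of order $(f_0\delta/M)^{1/\delta}$ independent of the initial amplitude, which in general does not exceed the normalization $\fint\varphi(0,x)\,dx=1/2$ and so yields no contradiction. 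The same circularity affects your lower bound: the modulation loss over $[t_0,t]$ is governed by $\beta(t_0)^\delta e^{\delta f_0 t}$ with $\beta(t_0)=\inf_x w(t_0,x)$, so uniformity as $t_0\to-\infty$ already presupposes the upper bound, and even then almost-monotonicity of $\beta$ does not preclude $\beta(t_0)\to0$. The telltale sign is that your sandwich step never invokes the normalization $\fint\varphi(0,x)\,dx=1/2$, without which no such bounds can hold. The paper closes both directions by contradiction at $t=0$ using the \emph{nonlinear} terminal-value solutions of \eqref{e_inhomogeneous_logistic_tvp}: if $e^{-f_0t_n}\varphi(t_n)\to\infty$ (resp.\ $\to0$), then by \cref{p_exponential} the constant $B(3/4,-t_n)$ (resp.\ $B(1/4,-t_n)$) eventually lies below (resp.\ above) $\varphi(t_n)$, so $\phi^{-t_n,3/4}$ is a subsolution (resp.\ $\phi^{-t_n,1/4}$ a supersolution) on $[t_n,0]$, forcing $\fint\varphi(0,x)\,dx\geq3/4$ (resp.\ $\leq1/4$), a contradiction. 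Those comparison solutions do not degenerate over long time intervals, and that is the ingredient your toolkit is missing.
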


Now, our first step is to prove the following lemma.
\begin{lem}\label{p_linearized_error}
Suppose that $\gamma_0$ is a $C^\infty(\mathbb{T})$ function such that $\|\gamma_0\|_\infty \leq C e^{- f_0T}$ for some $C>0$.
\[\begin{cases}
	\gamma_t = \gamma_{xx} + f(x, \gamma),\\
	\gamma(0,x) = \gamma_0(x).
\end{cases}\]
Fix $\epsilon>0$.  Then there is $T_0 >0$, which can be chosen uniformly if $C$ is bounded above, such that if $T \geq T_0$, we have that
\[
	\left\|\gamma(T/2) - \psi_0 \left(\int \gamma_0 \psi_0\right) e^{f_0 T/2}\right\|_2
		\leq \epsilon e^{-f_0 T/2}.
\]
\end{lem}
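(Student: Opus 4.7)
\medskip

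\noindent\textbf{Proof plan for Lemma~\ref{p_linearized_error}.}

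The idea is that, because $\|\gamma_0\|_\infty$ is so small, the solution $\gamma$ stays in the regime where the nonlinearity $f(x,\cdot)$ is very close to its linearization $f_u(x,0)\cdot$. I will compare $\gamma$ with the solution $h$ of the \emph{linearized} problem
\[
    h_t = h_{xx} + f_u(x,0)\, h, \qquad h(0,x)=\gamma_0(x),
\]
on $\T$. The proof then has two independent pieces: (i) estimate $\|h(T/2) - \psi_0\bigl(\int\gamma_0\psi_0\bigr) e^{f_0 T/2}\|_2$ by spectral theory, and (ii) estimate $\|\gamma(T/2) - h(T/2)\|_\infty$ via a Duhamel/maximum-principle argument.

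For (i), let $(\psi_n,f_n)_{n\geq 0}$ be the orthonormal eigenbasis of $\Delta + f_u(x,0)$ on $\T$, so that $f_0 > f_1 \geq f_2\geq\cdots$. Expanding $\gamma_0 = \sum_n a_n \psi_n$ with $a_n = \int\gamma_0\psi_n$, we have $h(T/2) = \sum_n a_n \psi_n e^{f_n T/2}$, and therefore
\[
    \Bigl\|h(T/2) - a_0\psi_0 e^{f_0 T/2}\Bigr\|_2^2 = \sum_{n\geq 1} a_n^2 e^{f_n T} \leq e^{f_1 T}\|\gamma_0\|_2^2 \leq C L\, e^{-2 f_0 T + f_1 T}.
\]
Since $f_1 < f_0$, the square root is bounded by $\epsilon\, e^{-f_0 T/2}/2$ once $T$ is large, uniformly in $C$ bounded.

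For (ii), I will use the regularity of $f$ together with condition~\eqref{e_f} to write
\[
    0 \leq f_u(x,0) s - f(x,s) \leq M s^{1+\delta}, \qquad s\in[0,1],
\]
(as in the proof of \cref{p_exponential}; cf.\ \eqref{e_concave}). Since $f(x,\gamma)\leq f_u(x,0)\gamma$, the function $\gamma$ is a subsolution of the linear equation, so the maximum principle yields $0\leq \gamma \leq h$. The semigroup bound $\|e^{tL}\|_{L^\infty\to L^\infty}\leq C e^{f_0 t}$ for $L = \Delta + f_u(x,0)$ then gives $\|h(t)\|_\infty \leq C e^{-f_0 T}e^{f_0 t}$. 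Setting $r = h - \gamma \geq 0$, we have
\[
    r_t = r_{xx} + f_u(x,0) r + g, \qquad g := f_u(x,0)\gamma - f(x,\gamma),\quad 0\leq g\leq M h^{1+\delta},
\]
with $r(0)\equiv 0$. Duhamel's formula and the semigroup bound give
\[
    \|r(T/2)\|_\infty \leq \int_0^{T/2} C e^{f_0(T/2 - s)} \cdot M C^{1+\delta} e^{-(1+\delta)f_0 T}e^{(1+\delta) f_0 s}\, ds \leq C\, e^{-(1+\delta) f_0 T/2},
\]
after carrying out the elementary integral $\int_0^{T/2} e^{\delta f_0 s}\,ds$. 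This is $e^{-\delta f_0 T/2}$ times the target bound $e^{-f_0 T/2}$, hence also $o(1)\, e^{-f_0 T/2}$ uniformly in $C$ bounded. Passing from $L^\infty(\T)$ to $L^2(\T)$ costs only a factor $\sqrt{L}$, and combining with (i) proves the lemma.

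\medskip

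\noindent\textbf{Main obstacle.} None of the steps is difficult in isolation; the only delicate point is the bookkeeping of exponents, in particular verifying that the quadratic (or $(1+\delta)$-power) feedback loses exactly a factor $e^{-\delta f_0 T/2}$ against the linear growth over time $T/2$. The uniformity in $C$ — needed so that the lemma can be applied later with varying prefactors — comes for free from the fact that each constant arising above depends on $C$ only through a fixed polynomial factor $C^{1+\delta}$, so the thresholds $T_0$ for (i) and (ii) are uniform whenever $C$ is bounded above.
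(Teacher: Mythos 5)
Your proof is correct, and it takes a route that is genuinely different in organization from the paper's, while relying on the same two underlying facts: the spectral gap of $\Delta + f_u(x,0)$ on $\T$, and the concavity bound $0 \leq f_u(x,0)s - f(x,s) \leq M s^{1+\delta}$. The paper rescales, setting $\Gamma = \gamma\, e^{f_0(T-t)}$, decomposes $\Gamma$ into its $\psi_0$-component $\Gamma_1$ and orthogonal remainder $\Gamma_2$, and then derives and integrates differential inequalities for $\dot\Gamma_1$ and for $\frac{d}{dt}\|\Gamma_2\|_2^2$ directly along the nonlinear flow; the nonlinear forcing term appears in both inequalities and is killed by the same $s^{1+\delta}$ smallness. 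You instead interpose the exactly linear solution $h$: your step (i) is then a purely spectral computation with no forcing at all, and the entire nonlinear error is isolated in the single Duhamel/semigroup estimate of step (ii), passed back to $L^2$ by the trivial $\sqrt{L}$ comparison. This decoupling is arguably cleaner. One point worth flagging: your use of the ordering $0 \leq \gamma \leq h$ (and the paper's use of $\dot\Gamma_1 \leq 0$ and of $f(x,\gamma)/\gamma$) both silently require $\gamma_0 \geq 0$, which is not stated in the hypotheses but does hold in every application of the lemma; it should be added to the statement. The uniformity in $C$ is handled correctly in your version, since all constants enter only through fixed powers $C$, $C^{1+\delta}$.
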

\begin{proof}
Define
\[
	\Gamma(t,x) = \gamma(t,x) e^{f_0 (T-t)},
\]
and we can easily see that $\Gamma$ satisfies
\[
	\Gamma_t - \Gamma_{xx} + f_0\Gamma = f(x,\gamma) e^{f_0(T-t)} = \left(\frac{f(x,\gamma)}{\gamma} \right) \Gamma.
\]
Letting $\Gamma_1(t) = \int \Gamma(t,x) \psi_0(x) dx$, we notice that
\[
	\dot{\Gamma}_1
		= e^{f_0(T-t)} \int \left( f(x,\gamma) - f_u(x,0)\gamma\right)\psi_0(x) dx.
\]
First, we notice that $\dot{\Gamma}_1 \leq 0$.  In addition, since $\psi_0$ multiplied by a large constant is a super-solution of $\Gamma$, it follows that $\Gamma$ is uniformly bounded.  We utilize this, along with \cref{e_concave}, to obtain
\[\begin{split}
	0 \geq \dot{\Gamma}_1
		&\geq - M e^{f_0(T-t)} \int \gamma(x)^{1+\delta} \psi_0(x) dx
		\geq - M C'e^{f_0(T-t)} e^{\delta f_0 (t-T)} \int \gamma(x) \psi_0(x)dx\\
		&\geq - M C' e^{\delta f_0 (t - T)} \Gamma_1.
\end{split}\]
Dividing by $\Gamma_1$, integrating in time, and exponentiating yields
\[
\begin{split}
	1 \geq \frac{\Gamma_1(T/2)}{\Gamma_1(0)} \geq \exp\left\{ - e^{ - \frac{\delta f_0}{2} T} C''\left[ 1 - e^{-\frac{\delta f_0}{2}T} \right]\right\}.
\end{split}
\]
Defining $\gamma_1(t) = \int \gamma(t,x) \psi_0(x)dx$, we may rewrite this inequality as
\begin{equation}\label{e_gamma_1_bound}
	| \gamma_1(T/2) - e^{f_0 T/2} \gamma_1(0)| \leq o(1) e^{-f_0T/2}.
\end{equation}

Now we let $\Gamma_2(t,x) = \Gamma(t,x) - \Gamma_1(t) \psi_0(x)$, and we bound $\Gamma_2$.  Notice that $\Gamma_2$ satisfies the equation
\begin{equation}\label{e_gamma_2}
\begin{split}
	(\Gamma_2)_t - &(\Gamma_2)_{xx} - (f_u(x,0) - f_0) \Gamma_2\\
		&= \left[ f(x,\gamma) e^{f_0(T-t)} - f_u(x,0)\Gamma\right] - \psi_0(x) e^{f_0(T-t)} \int (f(y,\gamma) - f_u(y,0)\gamma(y)) \psi_0(y) dy 
\end{split}
\end{equation}
Because $\Gamma_2$ is orthogonal to the the eigenspace for eigenvalue $f_0$ of $\Delta + f_u(x,0)$, then there is $\alpha$ such that
\[
	\int \left[ (\Gamma_2)_{xx} + (f_u(x,0) - f_0) \Gamma_2\right]\Gamma_2 dx
		\leq - \alpha \|\Gamma_2\|_2^2.
\]
Hence, multiplying \cref{e_gamma_2} by $\Gamma_2$ and, again, using that $\Gamma_2$ and $\psi_0$ are perpendicular yields
\[\begin{split}
	\frac{1}{2} \frac{d}{dt} \|\Gamma_2\|_2^2 + \alpha \|\Gamma_2\|_2^2
		&\leq e^{f_0(T-t)} \int \left[ f(x,\gamma) - f_u(x,0)\gamma \right] \Gamma_2 dx\\
		&\leq M e^{f_0(T-t)} \int \gamma^{1+\delta} |\Gamma_2| dx = M e^{\delta f_0(t-T)} \int \Gamma^{1+\delta} \Gamma_2 dx
\end{split}\]
As we noted above, $\|\Gamma\|_\infty$ is uniformly bounded in time.  Hence, we may write this as
\[
	\frac{d}{dt}\|\Gamma_2\|_2^2 + 2\alpha \|\Gamma_2\|_2^2
		\leq C e^{\delta f_0(t-T)} \|\Gamma_2\|_2,
\]
where $C$ is a new constant independent of $t$ and $T$.  Solving this differential inequality and using that $\|\Gamma_2(0)\|\leq C$ for some constant $C$, yields constants $C>0$ and $\theta>0$ such that $\|\Gamma_2(T/2)\|_2 \leq Ce^{-\theta T}$.

The combination of this inequality with \cref{e_gamma_1_bound} yields that
\[
	\|\gamma(T/2) - e^{f_0 T/2} \gamma_1(0) \psi_0\|_2
		\leq C|\gamma_1(T/2) - e^{f_0 T/2} \gamma_1(0)| + e^{-f_0 T/2} \|\Gamma_2\|_2
		\leq o(1) e^{-f_0 T/2},
\]
finishing the proof.
\end{proof}

We now show how to use this lemma to conclude \cref{p_inhomogeneous_global}.  Fix $m\in (0,1)$ and define the starting time $S(m,T)$ as the time such that
\begin{equation}\label{e_smt}
	\int \varphi(S(m,T),x) \psi_0(x) dx =  B(m,T) \int \psi_0.
\end{equation}
We first show that $\fint\varphi(S(m,T)+T,x)dx \to m$ as $T$ tends to infinity.  Then, defining $T_m$ as the unique time such that
\[
	\fint \varphi(T_m, x) dx = m,
\]
 we will leverage this to show that
 \begin{equation}\label{e_bmt_limit}
 	\lim_{T\to\infty} \frac{\int \varphi(T_m - T, x) \psi_0(x) dx}{B(m,T) \int \psi_0} = 1,
 \end{equation}
finishing the proof.

By parabolic regularity, \cref{p_linearized_error}, and \cref{p_exponential}, we have that
\[
	\|\varphi(S(m,T) + T/2) - \phi^{T,m}(T/2)\|_2 \leq o(1) e^{-f_0 T/2},
\]
where $o(1)$ tends to zero as $T$ tends to infinity (uniformly for $m$ bounded away from $1$).  By parabolic regularity this may be strengthened to
\[
	\|\varphi(S(m,T) + T/2) - \phi^{T,m}(T/2)\|_\infty \leq o(1) e^{-f_0T/2}.
\]
Define $\eta(t,x) = \varphi(S(m,T) + T/2 + t,x) - \phi^{T,m}(T/2 + t,x)$ and notice that $\eta$ solves
\[
	\eta_t - \eta_{xx} = c(t,x) \eta,
\]
where $c(t,x) = (f(x,\varphi^1) - f(x,\varphi^2))/(\varphi^1 - \varphi^2) \leq f_u(x,0)$.  Hence, $o(1) \psi_0 e^{f_0(t - T/2)}$ is a super-solution to the equation above with initial data larger than $\eta(0,x)$.  This implies that
\[
	|\eta(T/2,x)| \leq o(1) \psi(x) \leq o(1),
\]
holds for every $x$.  Using the definition of $\eta$, this implies that
\[
	\|\varphi(S(m,T)+T,\cdot) - \varphi^{T,m}(T,\cdot)\|_\infty \leq o(1).
\]
By our definition of $\varphi^{T,m}$, this implies that
\[
	\lim_{T\to\infty} \fint \varphi(S(m,T)+T,x) dx = m,
\]
holds, where the limit may be taken uniformly in $m$ if $m$ is bounded away from $1$.

Now we will show \cref{e_bmt_limit}.  Suppose this does not hold.  Let $\theta > 1$, without loss of generality, be such that there is a sequence $T_1, T_2, T_3, \dots$ such that
\[
	\lim_{n\to\infty} \frac{\int \varphi(T_m - T_n, x) \psi_0(x) dx}{B(m,T_n) \int \psi_0}
		> \theta.
\]
We note that the proof is similar for $\theta < 1$ and the limit above is less than $\theta$.  Using \cref{p_exponential_global} and the definition of $S(m,T)$, we can choose $N_0$ be such that if $n \geq N_0$ then
\[
	\frac{e^{T_m - T_n}}{e^{S(m,T_n)}}
		> \theta,
\]
and hence,
\[
		T_m - T_n - S(m,T_n) > \log(\theta) > 0.
\]
By parabolic regularity, along with the smoothness and positivity of $f$, we may find $\epsilon$ depending only on $m$ and $\theta$ such that $f(x,\varphi(t,x)) > \epsilon$ for all $t \in [T_n + S(m,T_n), T_m]$.  This holds because $\varphi$ is bounded away from $0$ and $1$ on this interval.  Integrating the equation for $\varphi$ on this time interval yields
\[
	\varphi(T_m) - \varphi(T_n + S(m,T_n)) = \int_{T_n + S(m,T_n)}^{T_m} f(x,\varphi) dx \geq \epsilon \log(\theta).
\]
The average of the first term is $m$, and the average of the second term converges to $m$ for large $n$.  This implies that the left hand side tends to zero, which is a contradiction.  This finishes the proof.

\section{Convergence to the global in time problem}\label{s_convergence}

In this section, we prove \cref{p_inhomogeneous_flatness}.  We will do this in two steps.  First we will prove the closeness of $u$ and $\phi^{T,m}$.  Then we will show that $\phi^{T,m}$ and $\varphi$ are close when suitably translated in time.  To this end, we state two lemmas.

\begin{lem}\label{p_convergence_phit}
Define $\phi^{T,m}$ as in \cref{e_inhomogeneous_logistic_tvp}, and define $I_{m,T}$ to be the interval $[nL, nL+L)$ containing $u_0^{-1}(B(m,T))$, we have that
\begin{equation}\label{e_uniform_convergence}
	\lim_{T\to\infty} \max_{m\in(0,1)} \left\| u(T,\cdot) - \phi^{T,m}(T,\cdot)\right\|_{L^\infty(I_{m,T})} = 0.
\end{equation}
\end{lem}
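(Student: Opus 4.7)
The plan is to read the conclusion directly off the sub- and super-solution bounds constructed in \cref{ss_upper_bound_inhomogeneous,ss_lower_bound_inhomogeneous}. Recall $x_m(T) = u_0^{-1}(B(m,T))$ as in \eqref{e_xmt}. For any fixed $r_2 > 0$, those arguments establish
\begin{equation*}
	\phi^{T,m}(T,y) (1 - o(1)) \leq u(T,y) \leq \phi^{T,m}(T,y) (1 + o(1))
\end{equation*}
on the interval $[x_m(T) - r_2 T, x_m(T) + r_2 T]$, with $o(1) \to 0$ as $T \to \infty$, uniformly for $m$ in any compact subset of $(0,1)$. Since $I_{m,T}$ is a period-$L$ interval containing $x_m(T)$, it lies inside this domain once $T$ is large. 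Subtracting, and using $0 \leq \phi^{T,m}(T, \cdot) \leq 1$, gives $\|u(T) - \phi^{T,m}(T)\|_{L^\infty(I_{m,T})} = o(1)$, which settles $m$ in any fixed compact subset $[\delta, 1-\delta]\subset (0,1)$.

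To upgrade to uniformity over all of $(0,1)$, I would handle the two endpoint regimes separately. Fix $\epsilon > 0$ and pick $\delta$ small. For $m \in (0, \delta]$, parabolic Harnack applied to the periodic problem \eqref{e_inhomogeneous_logistic_tvp}, together with the terminal condition $\fint \phi^{T,m}(T, x) dx = m$, gives $\|\phi^{T,m}(T, \cdot)\|_\infty \leq C m$ for a constant $C$ depending only on $f$. Combined with the upper bound above, both $u(T, \cdot)$ and $\phi^{T,m}(T, \cdot)$ are pointwise at most $2 C m$ on $I_{m, T}$, so their $L^\infty$ difference is at most $2 C \delta$.

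For $m \in [1-\delta, 1)$, the complementary picture is that both functions should be close to $1$. For $\phi^{T,m}$, combining the stability of the state $1$ for \eqref{e_inhomogeneous_logistic_tvp} (via $f_u(x, 1) < 0$) with the $L^\infty$-closeness of $\phi^{T,m}(T, \cdot)$ and $\varphi(S(m,T) + T, \cdot)$ established in the proof of \cref{p_inhomogeneous_global}, and using the long-time convergence of $\varphi$ to $1$ from \cref{p_exponential_global}, one obtains $\min \phi^{T,m}(T, \cdot) \geq 1 - \rho(1 - m)$ for some modulus $\rho$ vanishing at $0$. For $u$, I would apply the lower bound of \cref{ss_lower_bound_inhomogeneous} at the reference value $m' = 1/2$, where the $o(1)$ is uniform, to get $u(T, y) \geq c_0 > 0$ on $I_{m,T} \subset (-\infty, x_{1/2}(T) + r_2 T]$, and then invoke a KPP-type propagation toward the stable state $1$ to upgrade this to $u(T, y) \geq 1 - \rho(1 - m)$ on $I_{m, T}$.

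The main obstacle will be the uniformity as $m \uparrow 1$: the lower bound on $u$ is merely positive, not close to $1$, so bridging from positivity to closeness demands a quantitative stability argument on intervals $I_{m,T}$ whose position drifts with $T$. The analogous control on $\phi^{T,m}$ asks for delicate analysis of the backward-in-time problem \eqref{e_inhomogeneous_logistic_tvp}; the natural route is a comparison with $\varphi$ near its $t\to+\infty$ limit $1$, channelled through the $L^\infty$-closeness of $\phi^{T,m}(T, \cdot)$ and $\varphi(S(m,T) + T, \cdot)$ from \cref{s_global_in_time}.
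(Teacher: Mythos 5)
Your overall plan — read the conclusion off the sub- and super-solution bounds of \cref{ss_upper_bound_inhomogeneous,ss_lower_bound_inhomogeneous}, and then handle the endpoints of the $m$-range separately — matches the paper's strategy in spirit, and the middle-range argument is correct. However, your treatment of $m$ near $1$ is where the proposal has a genuine gap, and you identify it yourself: you try to show directly that $u(T,\cdot)$ is close to $1$ on $I_{m,T}$ by first getting positivity via the lower bound at $m'=1/2$ and then invoking an unspecified ``KPP-type propagation toward the stable state $1$.'' That last step is not quantitative enough — the interval $I_{m,T}$ drifts with $T$, and bridging from ``$u \geq c_0$'' to ``$u \geq 1 - \rho(1-m)$'' uniformly in $m$ and $T$ would require an independent argument that you do not supply. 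The paper has a much shorter route: fix $m_1<1$ close to $1$ and build the sub-solution $v=\phi^{T,m_1}\underline{w}$ with $\underline{w}(0,x)=\min\{u_0(x)/B(m_1,T),1\}$. This is valid as a sub-solution of $u$ for \emph{every} $T$ and is entirely independent of the variable $m$, so for $T$ large (depending only on $\epsilon$ and $f$) it gives $u(T,x)\geq 1-\epsilon$ for all $x\leq \max I_{m_1,T}$. Since $m\geq m_1$ forces $B(m,T)\geq B(m_1,T)$, hence $u_0^{-1}(B(m,T))\leq u_0^{-1}(B(m_1,T))$ and $I_{m,T}\subset(-\infty,\max I_{m_1,T}]$, the bound applies on $I_{m,T}$. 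On the other side, the comparison principle gives $\phi^{T,m}(T,\cdot)\geq\phi^{T,m_1}(T,\cdot)\geq 1-\epsilon/2$ directly from the terminal condition. Both $u(T,\cdot)$ and $\phi^{T,m}(T,\cdot)$ then lie in $[1-\epsilon,1]$ on $I_{m,T}$, which closes the case with no reference to $\varphi$, $S(m,T)$, or the backward-in-time problem.

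On the small-$m$ side you add a separate Harnack-based case, which the paper does not use: the paper asserts that the limits in \cref{ss_upper_bound_inhomogeneous,ss_lower_bound_inhomogeneous} are uniform over $m$ bounded away from $1$ (with no lower restriction), so the $m\leq m_1$ case is handled in one stroke. Your extra case is not wrong — it is arguably a more robust way to handle $m\to 0$, since the super-solution $\overline{w}$ starts from $\max\{u_0(x)/B(m,T),1\}$, whose sup scales like $1/m$ and therefore makes the uniformity in the Feynman--Kac estimate less transparent there — but note that as written your bound on $u(T,\cdot)$ in that case still runs through $u(T,y)\leq\phi^{T,m}(T,y)\overline{w}(T,y)$, so you still need control on $\overline{w}$; Harnack on $\phi^{T,m}$ alone doesn't control the diffusive factor $\overline{w}$. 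In short: the middle range is fine, the small-$m$ case is a reasonable (if not fully closed) precaution, but the $m\uparrow 1$ case needs the fixed-$m_1$ sub-solution plus comparison-monotonicity argument to be completed.
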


\begin{lem}\label{p_convergence_smt}
Define $S(m,T)$ as in \cref{e_smt}.  Then,
\[
	\lim_{T\to\infty} \max_{t\in [T/2,\infty]} \|\varphi(S(m,T)+t,\cdot) - \phi^{T,m}(t, \cdot)\|_\infty = 0.
\]
This limit can be taken uniformly for $m$ which is bounded away from $1$.
\end{lem}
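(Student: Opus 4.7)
The plan is to mirror the two-step argument already carried out inside the proof of \cref{p_inhomogeneous_global}: first establish closeness at the midpoint $t=T/2$ via \cref{p_linearized_error}, then propagate this closeness forward in time through a maximum-principle comparison. What is new relative to \cref{p_inhomogeneous_global} is that a uniform bound is needed on the whole forward ray $t\in[T/2,\infty)$, whereas the proof of \cref{p_inhomogeneous_global} only required control at the single instant $t=T$.

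For the midpoint estimate I would apply \cref{p_linearized_error} twice over a window of length $T/2$. The first application takes the constant initial datum $\gamma_0\equiv B(m,T)$, which by \cref{p_exponential} has sup norm at most $C_m e^{-f_0T}$ and projects onto $\psi_0$ as $B(m,T)\int\psi_0$. The second application takes $\gamma_0=\varphi(S(m,T),\cdot)$; by \cref{p_exponential_global} combined with $\int\psi_0^2=1$ one has $S(m,T)=-T+O_m(1)$, so this initial datum also has sup norm of order $e^{-f_0T}$, while the defining identity \cref{e_smt} ensures that its projection against $\psi_0$ is again $B(m,T)\int\psi_0$. Hence $\phi^{T,m}(T/2,\cdot)$ and $\varphi(S(m,T)+T/2,\cdot)$ admit the same $L^2$ approximation $\psi_0\cdot B(m,T)(\int\psi_0)e^{f_0T/2}$ up to errors bounded by $\epsilon\,e^{-f_0T/2}$ with $\epsilon\to 0$ as $T\to\infty$; subtracting cancels the principal part. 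A Gagliardo--Nirenberg interpolation, combined with uniform $C^\alpha$ bounds from parabolic regularity for bounded solutions of \cref{e_kpp}, upgrades the resulting $L^2$ smallness to $\|\varphi(S(m,T)+T/2,\cdot)-\phi^{T,m}(T/2,\cdot)\|_\infty\leq o(1)\,e^{-f_0T/2}$, with uniformity in $m$ bounded away from $1$ inherited from the corresponding uniformity clauses of \cref{p_exponential,p_linearized_error}.

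For the forward propagation on $[T/2,T]$, set $\eta(s,x)=\varphi(S(m,T)+s,x)-\phi^{T,m}(s,x)$. Exploiting that $u\mapsto f(x,u)/u$ is decreasing, exactly as in the proof of \cref{p_inhomogeneous_global}, $\eta$ solves $\eta_s=\eta_{xx}+c(s,x)\eta$ with $c(s,x)\leq f_u(x,0)$. The function $\Psi(s,x)=a\,\psi_0(x)e^{f_0(s-T/2)}$ satisfies $\Psi_s-\Psi_{xx}-f_u(x,0)\Psi=0$ and is therefore a super-solution of $\eta$'s equation; choosing $a=\|\eta(T/2)\|_\infty/\min\psi_0$, the maximum principle applied to $\Psi\pm\eta$ yields $\|\eta(s,\cdot)\|_\infty\leq C\|\eta(T/2)\|_\infty e^{f_0(s-T/2)}$ for $s\in[T/2,T]$, which is $o(1)$ by the midpoint bound.

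The hard part, and the one genuinely new ingredient, will be control for $s>T$, where the super-solution $\Psi$ grows too rapidly to be directly useful. Here I would extend $\phi^{T,m}$ as a forward solution past $s=T$ and invoke the stability of the steady state $\varphi^*\equiv 1$. Since $f(x,0)=f(x,1)=0$ and $f_u(x,1)<0$, the second assertion of \cref{p_exponential_global} provides an exponential decay rate $e^{-f_1 s}$ for $1-\varphi$, and parabolic regularity propagates this quantitative stability to $\phi^{T,m}$ since $\phi^{T,m}(T,\cdot)$ lies within $o(1)$ of $\varphi^*$ by the previous step. Continuous dependence controls $\eta$ on any bounded forward window $[T,T+K]$, while for $s\geq T+K$ with $K$ chosen large each of $\varphi(S(m,T)+s,\cdot)$ and $\phi^{T,m}(s,\cdot)$ lies within $o(1)$ of $\varphi^*$, hence within $o(1)$ of the other. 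Concatenating the three regimes yields the required uniform convergence, with all constants depending on $m$ only through the uniformity clauses already established.
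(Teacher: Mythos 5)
Your proof is correct and follows essentially the route the paper intends: the paper omits the proof of this lemma, but the displayed computation immediately following \cref{p_linearized_error} in \cref{s_global_in_time} is exactly your first two steps --- two applications of \cref{p_linearized_error} whose $\psi_0$-projections agree precisely because of the definition of $S(m,T)$ in \eqref{e_smt}, an $L^2$-to-$L^\infty$ upgrade by parabolic regularity, and then the super-solution $o(1)\psi_0 e^{f_0(t-T/2)}$ to propagate across $[T/2,T]$. The regime $t>T$ is the part the paper leaves entirely implicit, and your treatment (Gronwall on a bounded window $[T,T+K]$ plus both solutions lying within $o(1)$ of $1$ for $t\geq T+K$) is the right way to finish; just note two small points: the sentence asserting that $\phi^{T,m}(T,\cdot)$ is within $o(1)$ of $\varphi^\ast\equiv 1$ is a slip (it is within $o(1)$ of $\varphi(S(m,T)+T,\cdot)$, whose mean is only $m$), and the window length $K$ --- hence the Gronwall factor --- degenerates as $m\to 0$, so as written your argument gives uniformity only for $m$ in compact subsets of $(0,1)$ rather than for all $m$ bounded away from $1$.
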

We will prove \cref{e_uniform_convergence} at the end of this section.  The proof of \cref{p_convergence_smt} follows from our choice of $S(m,T)$ and from work in \cref{p_linearized_error}.  Hence, we omit it.  We now show how to conclude \cref{p_inhomogeneous_flatness} using these two lemmas.

\begin{proof}[Proof of \cref{p_inhomogeneous_flatness}]
First, denote $I_{m,T}$ as $[nL,nL+L]$.  Define $T^{S_n}$ as the unique time such that
\[
	\fint \varphi(T^{S_n},x)dx = S_n
,\]
where $S_n$ is as in \cref{e_sn}.  We show that if $T$ (or equivalently $n$) is large enough, then $T^{S_n}$ and $S(m,T)$ are close, and that we can obtain this in a uniform way for $m$ bounded away from 1.  To this end, notice that, by the definition of $I_{m,T}$,
\[
	u_0(nL) \geq B(m,T) \geq u_0(nL+L)
.\]
In addition, by \cref{p_oscillations} and \cref{p_exponential}, we have that
\begin{equation}\label{e_bmt_n}
	\frac{u_0(nL)}{B(m,T)} = 1 + o(1),
\end{equation}
where $o(1)$ tends to zero as $T$ tends to infinity in a uniform way if $m$ is bounded away from $1$.  Using \cref{p_exponential_global}, we then compute that
\[\begin{split}
	1 + o(1)
		&= \frac{u_0(nL)}{B(m,T)}
		= \left(\frac{\fint \varphi(T^{S_n},x)dx}{\left(\int \psi_0(x)dx\right)^2}\right)\left(\frac{\int \psi_0 dx}{ \int \varphi(S(m,T),x) \psi_0(x)dx}\right)\\
		&= \left(\frac{\fint \alpha (\psi_0(x) + o(1)) e^{f_0 T^{S_n}}dx}{\left(\int \psi_0(x)dx\right)^2}\right)\left(\frac{\int \psi_0 dx}{ \int  \alpha (\psi_0(x) + o(1)) e^{f_0 S(m,T)}\psi_0(x)dx}\right)
		= \frac{e^{f_0 T^{S_n}}}{e^{f_0 S(m,T)}} (1+o(1)).
\end{split}\]
Again, the constant $o(1)$ tends to zero uniformly as long as $m$ is bounded away from $1$.  This yields the claim that $T^{S_n} - S(m,T)$ tends to zero as $T$ tends to infinity.

Fix $\epsilon>0$.  Using \cref{p_convergence_phit}, choose $T_1$ large enough that if $T \geq T_1$, then
\begin{equation}\label{e_u_phi_close}
	\max_{m\in(0,1)} \|u(T,\cdot) - \phi^{T,m}(T,\cdot)\|_{L^\infty(I_{m,T})} \leq \epsilon / 2.
\end{equation}
In addition, choose $m_1< 1$, depending only on $f$ and $\epsilon$, such that if $m \geq m_1$, then
\[
	\phi^{T,m}(T, x) \geq 1 - \epsilon/2,
\]
for all $x \in \T$ and any $T$.

We will consider three cases.  First, if $n$ is small enough that $\varphi(T^{S_n}, x) \geq B(m_1,T)$, the maximum principle implies that
\[
	\varphi(T^{S_n} +T, x) \geq \phi^{T,m_1}(T,x) \geq 1- \epsilon/2.
\]
This, combined with \cref{e_u_phi_close}, implies that 
\[
	\|\varphi(T^{S_n}+T,x) - u(T,\cdot)\|_{L^\infty([nL, nL+L])} \leq \epsilon,
\]
for $T \geq T_1$.

The second case is when $n$ is such that $\fint \varphi(T^{S_n},x)dx \leq B(m_1, T)\left( \int \psi_0 \right)^2$.  This implies that $m \leq m_1$.  Indeed, using the definition of $S_n$, we obtain that 
\[
	B(m_1,T)\left( \int \psi_0 \right)^2
		\geq \fint \varphi(T^{S_n},x) dx
		= S_n
		= u_0(nL) \left( \int \psi_0 \right)^2
		\geq B(m,T) \left( \int \psi_0 \right)^2.
\]
Thus, $B(m,T) \leq B(m_1,T)$, implying that $m \leq m_1$.  Hence we may find $T_2$, depending only on $m_1$, $f$, and $\epsilon$, such that if $T \geq T_2$ then $T^{S_n}$ is close enough to $S(m,T)$ that
\[
	\|\varphi(T^{S_n}+T, \cdot) - \varphi(S(m,T)+T,\cdot)\|_{L^\infty} \leq \epsilon/8.
\]
In addition, increasing $T_2$ if necessary, we may apply \cref{p_convergence_smt} to get that if $T \geq T_2$ then
\[
	\|\varphi(S(m,T)+ T, \cdot) - \phi^{T,m}(T,\cdot)\|_\infty \leq \epsilon /16.	
\]
The combination of these two inequalities with \cref{e_u_phi_close} implies that if $T \geq T_1 + T_2$, then
\[
	\|\varphi(T^{S_n} + T, \cdot) - u(T,\cdot)\|_{L^\infty([nL,nL+L])} \leq \epsilon.
\]

Finally, we handle the case when neither of these is true.  That both of these conditions do not hold, along with parabolic regularity, implies that we may find a positive constant $\theta < 1$, which depends only on $m_1$ and $f$, such that
\[
	\theta \fint \varphi(T^{S_n}, x) dx \leq B(m_1,T) \left( \int \psi_0\right)^2.
\]
Using \cref{p_exponential} we may find $t_0>0$, depending only on $f$ and $m_1$, such that
\[
	\fint \varphi(T^{S_n}, x) dx \leq B(m_1,T-t_0) \left( \int \psi_0\right)^2.
\]
Following the work above, we then have that $B(m,T) \leq B(m_1,T-t_0)$.  Hence we obtain
\[
	(1- m) = \fint (1- \phi^{T,m}(T,x))dx
		\geq \fint (1-\phi^{T-t_0,m_1}(T, x))dx
		\geq C e^{f_1 t_0}
.\]
The constant $C$ depends only on $m_1$ and $f$, and the last inequality comes from considering a constant multiple of $\psi_1 e^{f_1 t}$ as a sub-solution to $1 - \phi^{T-t_0}$ on the interval $[T-t_0, T]$.  This shows that $m$ is bounded away from $1$ independent of $T$.  Hence we may argue exactly as in the last paragraph, finishing the proof.
\end{proof}

We will now show how to conclude \cref{p_convergence_phit}.  The main idea is largely the same: when $m$ is large enough, both $u$ and $\phi^{T,m}$ are near $1$, and when $m$ is bounded from $1$, we may take the limit in $T$ uniformly in $m$.
\begin{proof}[Proof of \cref{p_convergence_phit}]
Fix $\epsilon > 0$, and we will choose $T$, independent of $m$, such that
\[
	\|u(T) - \phi^{T,m}(T) \|_{L^\infty(I_{m,T})} \leq \epsilon.
\]
To this end, we utilize the work from \cref{s_inhomogeneous}.

First, we may use parabolic regularity to find $m_1<1$ depending only on $f$ and $\epsilon$ such that $1 - \phi^{T,m_1}(T,x) \leq \epsilon/2$.  We may now consider, separately, the cases when $m \leq m_1$ and when $m \geq m_1$.

If $m \geq m_1$, using a sub-solution, $v = \phi^{T, m_1} \underline{w}$, as in \cref{ss_lower_bound_inhomogeneous}, we may choose $T$, depending only on $\epsilon$ and $f$, such that
\[
	\underline{w}(T,x) \geq (1 - \epsilon)/(1 - \epsilon/2),
\]
for any $x \leq \max I_{m_1,T}$.  Hence, we have that
\[
	1 \geq u(T,x)
		\geq v(T,x)
		\geq \phi^{T, m_1}(T,x) w_\ell
		\geq (1 - \epsilon/2) \frac{1 - \epsilon}{1- \epsilon/2}
		= 1 - \epsilon.
\]
for any $x \leq \max I_{m_1,T}$.  By the maximum principle, we can see that $\phi^{T,m} \geq \phi^{T,m_1}$ if $m \geq m_1$.  Hence we have that, if $m \geq m_1$
\[
	\|u(T) - \phi^{T,m}\|_{L^\infty(I_{m,T})} \leq \epsilon.
\]

The case when $m \leq m_1$ follows from our work above.  Indeed, since $m$ is bounded away from $1$, we may choose $T$ such that $\overline{w}$ and $\underline{w}$ from \cref{ss_upper_bound_inhomogeneous} and \cref{ss_lower_bound_inhomogeneous}, respectively, satisfy
\[
	1 - \epsilon/2 \leq \underline{w}(T,x),
		~~\text{ and }~~
		\overline{w}(T,x) \leq 1 + \epsilon/2,
\]
for any $x\in I_{m,T}$.  Hence we have that, for any $x\in I_{m,T}$,
\[
	\left(1-\frac{\epsilon}{2}\right) \phi^{T,m}(T,x)
		\leq \underline{w}(T,x) \phi^{T,m}(T,x)
		\leq u(T,x)
		\leq \overline{w}(T,x) \phi^{T,m}(T,x)
		\leq \left(1 + \frac{\epsilon}{2}\right) \phi^{T,m}(T,x),
\]
which finishes the proof.
\end{proof}

\section{Well-posedness of the global in time solution}\label{s_wellposed}

In this section, we establish the existence and uniqueness (up to translation) of the solution to \cref{e_logistic_global}.  In other words, we prove the following proposition.
\begin{prop}
There exists a unique global in time solution of \cref{e_logistic_global}.
\end{prop}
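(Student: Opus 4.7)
The plan is to establish existence via a truncation procedure and uniqueness via the asymptotic behavior stated in \cref{p_exponential_global}, which must be proved along the way. The main obstacle is the sharp asymptotic $\varphi \sim \alpha \psi_0 e^{f_0 t}$ at $-\infty$: bootstrapping the nonlinear correction past the spectral gap requires uniform-in-negative-time control of the full solution, which one obtains by combining the crude bound $\varphi = O(e^{f_0 t})$ (via the super-solution $\alpha\psi_0 e^{f_0 t}$) with $L^2$ spectral estimates as in \cref{p_linearized_error}. Once this asymptotic is in hand, the rest of the argument is relatively short.

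For existence, I would produce approximants by solving the forward Cauchy problem on $[-n,\infty)\times\T$ with constant initial datum $\varphi_n(-n,x)\equiv\epsilon_n$, choosing $\epsilon_n\in(0,1)$ so that $\fint\varphi_n(0,x)\,dx = 1/2$; such $\epsilon_n$ exists and is unique by the intermediate value theorem, since the map $\epsilon\mapsto\fint\varphi(0,\cdot)\,dx$ is continuous and strictly monotone by comparison. The maximum principle keeps $\varphi_n\in(0,1)$, and by the periodic KPP theory each $\varphi_n\to 1$ as $t\to+\infty$, which forces $\epsilon_n\to 0$. Parabolic regularity then yields a locally uniform subsequential limit $\varphi:\R\times\T\to(0,1)$ which is a classical global-in-time solution satisfying the normalization.

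For \cref{p_exponential_global}, I would decompose any global solution as $\varphi = \varphi_\parallel(t)\psi_0(x) + \varphi_\perp(t,x)$ with $\varphi_\parallel(t):=\int \varphi\psi_0\,dx$ and $\varphi_\perp$ orthogonal to $\psi_0$ in $L^2(\T)$. Mimicking the arguments of \cref{p_linearized_error} and using the concavity bound \cref{e_concave}, one shows that $\varphi_\perp$ decays strictly faster than $\psi_0 e^{f_0 t}$ as $t\to-\infty$, while $\varphi_\parallel(t) = \alpha e^{f_0 t}(1+o(1))$ for some $\alpha>0$, with positivity of $\alpha$ following from the strict positivity of $\varphi$ and of $\psi_0$. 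The corresponding statement for $1-\varphi$ at $t\to+\infty$ follows by linearizing the equation for $1-\varphi$ around zero and repeating the argument with $\psi_1$ and $f_1$.

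For uniqueness, let $\varphi_1,\varphi_2$ be two solutions of \cref{e_logistic_global} with leading coefficients $\alpha_1,\alpha_2>0$ at $-\infty$, set $\tau=f_0^{-1}\log(\alpha_1/\alpha_2)$, and put $\tilde\varphi_2(t,x)=\varphi_2(t+\tau,x)$, so that $\varphi_1$ and $\tilde\varphi_2$ share the same leading coefficient. A direct computation gives, for $\rho:=\varphi_1/\tilde\varphi_2$,
\[
\rho_t = \rho_{xx} + 2\frac{(\tilde\varphi_2)_x}{\tilde\varphi_2}\rho_x + \rho\left[\frac{f(x,\rho\tilde\varphi_2)}{\rho\tilde\varphi_2} - \frac{f(x,\tilde\varphi_2)}{\tilde\varphi_2}\right],
\]
whose reaction term is strictly positive when $\rho<1$ and strictly negative when $\rho>1$, since $f(x,s)/s$ is strictly decreasing in $s$ by \cref{e_f}. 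The matched asymptotics at $t\to-\infty$ and the common limit $1$ at $t\to+\infty$ force $\rho\to 1$ uniformly in $x$ at both ends, so any interior extremum of $\rho$ not equal to $1$ contradicts the strong maximum principle applied to $\rho-1$ with the appropriate sign. Hence $\rho\equiv 1$ and $\varphi_1(t,x)=\varphi_2(t+\tau,x)$. Because $\partial_t\varphi_2>0$ (another maximum principle application, using $f(x,\varphi_2)>0$), the map $\tau\mapsto\fint\varphi_2(\tau,x)\,dx$ is strictly increasing, and the common normalization forces $\tau=0$, i.e., $\varphi_1\equiv\varphi_2$.
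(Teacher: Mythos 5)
Your overall plan matches the paper's: existence through a limit of truncated Cauchy problems, the two-sided exponential asymptotics of \cref{p_exponential_global}, and uniqueness via a time-translation argument. The existence step is essentially the paper's, merely re-parameterized (you fix the starting time $-n$ and solve for the starting value $\epsilon_n$; the paper fixes the starting value $1/n$ and solves for the starting time $T_n$).

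The serious gap is in the crude two-sided bound $c\,e^{f_0 t}\leq\varphi(t,\cdot)\leq C\,e^{f_0 t}$ for $t\leq 0$, which is the prerequisite for the spectral bootstrap in \cref{p_exponential_global} and therefore also for the matched shift $\tau$ in your uniqueness argument. You propose to get the upper bound ``via the super-solution $\alpha\psi_0 e^{f_0 t}$,'' but the comparison principle runs forward in time: to conclude $\varphi\leq\alpha\psi_0 e^{f_0 t}$ on $[T,0]$ you need $\varphi(T,\cdot)\leq\alpha\psi_0 e^{f_0 T}$ at the starting time $T<0$, and the only available a priori bound $\varphi(T,\cdot)<1$ forces $\alpha\geq e^{-f_0 T}/\min\psi_0\to\infty$ as $T\to-\infty$, so no uniform constant results. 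The $L^2$ spectral estimates of \cref{p_linearized_error} cannot break this circle, since they presuppose the smallness you are trying to establish, and a sub-solution of modulated form $\beta(t)\psi_0 e^{f_0 t}$ likewise gives no upper bound on $\beta(T)$ because the modulation saturates. The paper closes this with a separate contradiction argument built on the terminal-value problems $\phi^{T,m}$ and the $B(m,T)$ estimates of \cref{p_exponential}: if $e^{-f_0 t_n}\varphi(t_n,\cdot)$ blew up (resp.\ vanished), Harnack would let one slide $\phi^{-t_n,3/4}$ below $\varphi$ (resp.\ $\phi^{-t_n,1/4}$ above it) on $[t_n,0]$, forcing $\fint\varphi(0,\cdot)\geq 3/4$ (resp.\ $\leq 1/4$), contradicting the normalization $\fint\varphi(0,\cdot)=1/2$. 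Your proposal offers no substitute for this mechanism, and without it the remainder of your argument is not grounded.

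Your uniqueness argument, on the other hand, is a genuinely different and arguably cleaner route than the paper's. The paper slides: it finds the infimal $h_0$ with $\varphi^1\leq\varphi^2(\cdot+h_0)$ and rules out $h_0>0$ by a two-case analysis according to whether $\alpha_1=e^{f_0 h_0}\alpha_2$ or $\alpha_1<e^{f_0 h_0}\alpha_2$. You instead shift $\varphi_2$ by the unique $\tau$ that matches leading coefficients at $t\to-\infty$ and apply the strong maximum principle to $\rho-1$ with $\rho=\varphi_1/\tilde\varphi_2$: the reaction term, with sign controlled by the monotonicity of $f(x,s)/s$ in \cref{e_f}, pushes $\rho$ toward $1$; since $\rho\to 1$ uniformly at both $t\to\pm\infty$, any interior extremum of $\rho$ other than $1$ is excluded. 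This avoids the paper's case split at the price of committing to the exact leading asymptotic before the comparison begins. A minor simplification: to pin $\tau=0$ you do not need the pointwise monotonicity $\partial_t\varphi_2>0$; the strict increase of $t\mapsto\fint\varphi_2(t,\cdot)$ already follows from $\frac{d}{dt}\fint\varphi_2=\fint f(\cdot,\varphi_2)>0$. But this whole paragraph, like \cref{p_exponential_global} itself, stands or falls with the crude bound discussed above.
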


We will prove this in a series of three lemmas.  First, we will exhibit the existence of a solution.  Afterwards, we show that any solution to \cref{e_logistic_global} must look grow and decay exponentially as $t$ tends to $-\infty$ and $\infty$, respectively.  Finally, we will use this qualitative property and the sliding method to establish uniqueness.

\begin{lem}\label{p_existence}
There exists a solution of \cref{e_logistic_global}.
\end{lem}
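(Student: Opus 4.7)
The plan is to construct $\varphi$ as a locally uniform limit of a sequence of finite-time Cauchy problems started in the arbitrarily distant past, with the initial amplitude tuned so that the spatial mean of the solution at time zero equals $1/2$. Concretely, for each $n\in\N$ and each $c\in[0,1]$, I let $\varphi^c_n$ solve the PDE in \cref{e_logistic_global} on $[-n,\infty)\times\T$ with constant initial datum $\varphi^c_n(-n,\cdot)\equiv c$, and set $A_n(c) := \fint \varphi^c_n(0,x)\,dx$. The comparison and strong maximum principles imply that $A_n\colon[0,1]\to[0,1]$ is continuous and strictly increasing with $A_n(0)=0$ and $A_n(1)=1$, so the intermediate value theorem produces a unique $c_n\in(0,1)$ with $A_n(c_n)=1/2$. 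I then define $\varphi_n := \varphi^{c_n}_n$, which satisfies $0<\varphi_n<1$ on $(-n,\infty)\times\T$.

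To pass to the limit I would invoke standard interior parabolic regularity: since $0\leq\varphi_n\leq 1$ and $f$ is smooth in its arguments, Schauder estimates on the compact torus $\T$ give uniform $C^{2+\delta,1+\delta/2}$ bounds on each box $[-T,T]\times\T$ once $n>T+1$. A diagonal extraction then produces a subsequence converging in $C^{2,1}_{\mathrm{loc}}(\R\times\T)$ to a function $\varphi$ which solves the PDE on all of $\R\times\T$, satisfies $0\leq\varphi\leq 1$, and, by uniform convergence at $t=0$, retains the normalization $\fint\varphi(0,x)\,dx=1/2$. In particular $\varphi$ is neither identically $0$ nor identically $1$, and the strong maximum principle applied to both $\varphi$ and $1-\varphi$ upgrades this to $0<\varphi<1$ throughout $\R\times\T$, which exhibits $\varphi$ as a genuine solution of \cref{e_logistic_global}.

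The only subtle point I anticipate is confirming that the normalization really does prevent the limit from collapsing to a trivial steady state: a priori $c_n$ might approach $1$ along a subsequence, yielding $\varphi\equiv 1$, or it might vanish so quickly that the limit is identically $0$. Both scenarios are ruled out simultaneously by the locally uniform $C^{2,1}$ convergence, which transfers the identity $A_n(c_n)=1/2$ directly to $\varphi$. As an aside, this construction is also morally consistent with the asymptotic $\varphi(t,x)\sim \alpha\,\psi_0(x)\,e^{f_0 t}$ as $t\to-\infty$ asserted by \cref{p_exponential_global}: one expects $c_n \sim \alpha\,e^{-f_0 n}$, reflecting the principal eigenvalue $f_0$ of the linearized operator $\partial_{xx}+f_u(x,0)$, though this refined asymptotic is not needed for the present existence statement.
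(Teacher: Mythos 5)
Your construction is correct and is essentially the dual parametrization of the paper's. The paper fixes the initial value ($\phi^n(T_n,\cdot)\equiv 1/n$) and treats the starting time $T_n\le 0$ as the unknown determined by the normalization $\fint\phi^n(0,x)\,dx=1/2$; it must then invoke the super-solution $(n\min\psi_0)^{-1}\psi_0(x)e^{f_0(t-T_n)}$ to check that $T_n\to-\infty$, so that the approximating domains exhaust $\R$. You instead fix the starting time $-n$ and tune the constant initial value $c_n$ via the intermediate value theorem applied to $c\mapsto\fint\varphi^c_n(0,x)\,dx$, which is legitimate because $0$ and $1$ are steady states (so the map takes the values $0$ and $1$ at the endpoints) and the map is continuous and monotone by comparison. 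This makes the exhaustion of $\R$ automatic and trades the principal-eigenfunction estimate for the IVT; the Schauder/diagonal-extraction step and the key observation that the normalization at $t=0$ survives locally uniform convergence (ruling out the trivial limits $0$ and $1$) are the same as in the paper. Both arguments are complete for the existence statement; the paper's choice of data $1/n$ is slightly more convenient later when matching the $t\to-\infty$ asymptotics of \cref{p_exponential_global}, but that is not needed here.
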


\begin{proof}
We establish existence by considering the limit of family of initial value problems.  Define $\phi^n$ to be the solution to the parabolic equation on $[T_n, \infty]\times \mathbb{T}$
\[\begin{cases}
	\phi^n_t = \phi^n_{xx} + f(x,\phi^n),\\
	\fint \phi^n(0,x) dx = 1/2,\\
	\phi^n(T_n,x) \equiv 1/n,
\end{cases}\]
where $T_n\leq 0$ is an unknown.  It is easy to check that $(n \min \psi_0)^{-1}\psi_0(x) e^{f_0(t-T_n)}$ is a super-solution, where $\psi_0$ and $f_0$ are the principle eigenfunction and eigenvalue of $\Delta + f_u(x,0)$.  Thus, we obtain
\[
	\frac{1}{2} = \fint \phi^n(0,x)dx \leq \frac{\|\psi_0\|_\infty}{n \min \psi_0} e^{-f_0 T_n}.
\]
This implies that $T_n \leq C \log(n)$, for some constant $C$.  Hence we may use parabolic regularity, along with the compactness of H\"older spaces to obtain convergence along a subsequence to a global in time function $\phi$ which satisfies
\[\begin{cases}
	\phi_t = \phi_{xx} + f(x,\phi),\\
	\fint \phi(0,x)dx = 1/2,
\end{cases}\]
finishing the proof.
\end{proof}


Now we establish that the asymptotics of any solution $\varphi$ to \cref{e_logistic_global} look like the solutions to the linearized problems around 0 and 1.  This amounts to proving \cref{p_exponential_global}, as stated in \cref{s_global_in_time}.

\begin{proof}[Proof of \cref{p_exponential_global}]
We will prove the claim for $t$ tending to $-\infty$.  The result for $t$ tending to infinity follows by looking at $\tilde\varphi(t,x) = 1 - \varphi(-t,x)$ and arguing similarly.  Our strategy is to first show that $\phi$ is bounded above and below by a multiple of $e^{f_0 t}$.  Then we will appeal to parabolic regularity to obtain the result.

To this end, we first assume, by contradiction, that $e^{-f_0 t} \varphi(t,x)$ tends to zero along some sequence $t_n$, tending to $-\infty$, for some choice of $x_n$.  By the parabolic Harnack inequality, it must converge to zero uniformly in space.  Hence, we may assume that, for any $\epsilon$, there exists $N$ such that if $n \geq N$ then $\|e^{-f_0 t_n} \varphi(t_n)\|_\infty \leq \epsilon$.  In this case, we may apply \cref{p_exponential} to find $t_n$ large enough that $\varphi^{-t_n, 1/4}(t-t_n,x)$ is a super-solution to $\varphi$ on $[-t_n,0]\times \T$.  Using this inequality at time $t=0$, we have that
\[
	\frac{1}{4}
		= \fint \varphi^{-t_n,1/4} (-t_n,x)dx
		\geq \fint \varphi(0,x)dx
		= \frac{1}{2}.
\]
This is clearly a contradiction, finishing the lower bound.

Now we will show that there is some $C$ such that $e^{-f_0 t} \varphi(t,x) \leq C$ for $t \leq 0$.  To this end, suppose that there is some sequence $t_n$ tending to $-\infty$ such that $e^{-f_0 t_n} \varphi(t_n,x_n) \to \infty$ for some sequence $x_n$.  Again, by the parabolic Harnack inequality, this implies that $\min e^{-f_0 t_n} \varphi(t_n) \to \infty$.  Hence, we may apply \cref{p_exponential} and take $n$ large enough that $\varphi^{-t_n,3/4}(t-t_n,x)$ is a sub-solution of $\varphi$ on $[-t_n,0]\times\T$.  Using this inequality at time $t = 0$ yields
\[
	\frac{3}{4}
		= \fint \varphi^{-t_n,3/4}(-t_n,x)dx
		\leq \fint \varphi(0,x)dx
		= \frac{1}{2}.
\]
This is a contradiction, giving us the desired upper bound.


Now, letting $\Phi(t,x) = e^{-f_0 t} \varphi(t,x)$, we will show that $\Phi$ converges to $\alpha \psi_0$.  To this end, we may write $\Phi_1 = \int \Phi(t,x) \psi_0(x)dx$ and $\Phi_2 = \Phi - \psi_0 \Phi_1$.  Then as in our work in \cref{p_linearized_error}, we may see that there are positive constants $C$ and $\theta$, independent of time such that
\[\begin{split}
	&0 \geq \dot{\Phi}_1 \geq - C e^{\delta f_0 t} \Phi_1,\\
	&\frac{d}{dt} \|\Phi_2\|_2^2 + 2 \theta \|\Phi_2\|_2^2 \leq C e^{\delta f_0 t} \|\Phi_2\|_2.
\end{split}\]
Solving these differential inequalities and using that $\Phi$ is bounded for all time gives us that, as $t$ tends to $-\infty$, $\Phi_1(t) \to \alpha$, for some $\alpha$, and that $\Phi_2(t) \to 0$.  Since $\Phi$ is bounded away from zero, we get that $\alpha >0$.  This gives us the desired result.
\end{proof}

Finally, we use this qualitative information, along with the sliding method, to prove uniqueness of any solution to \cref{e_logistic_global}.

\begin{lem}\label{p_uniqueness}
Any solution to \cref{e_logistic_global} is unique.
\end{lem}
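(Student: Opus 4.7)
The argument will proceed by a sliding method in the time variable, made possible by the precise asymptotics of any solution at $t=-\infty$ supplied by Lemma \ref{p_exponential_global}. Let $\varphi_1$ and $\varphi_2$ be two solutions of \eqref{e_logistic_global}, and let $\alpha_i > 0$ be the associated prefactors from that lemma, obtained uniformly in $x \in \mathbb{T}$ by the parabolic regularity underlying the convergence. Set $\tau = f_0^{-1}\log(\alpha_1/\alpha_2)$ and $\tilde{\varphi}_2(t,x) = \varphi_2(t+\tau, x)$. Then $\tilde{\varphi}_2$ solves the same PDE and has the same leading tail $\alpha_1 \psi_0(x) e^{f_0 t}$ as $\varphi_1$ at $-\infty$. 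The main claim to establish is $\tilde{\varphi}_2 \equiv \varphi_1$; the normalization at $t = 0$ will then force $\tau = 0$.

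For the main claim, fix $s > 0$ and define $w(t,x) = \tilde{\varphi}_2(t+s,x) - \varphi_1(t,x)$. The matched asymptotics yield
\[
    w(t,x) = \alpha_1 \psi_0(x) e^{f_0 t}\bigl(e^{f_0 s} - 1 + o(1)\bigr) \qquad \text{as } t \to -\infty,
\]
so $w(t,x) > 0$ for every $x \in \mathbb{T}$ and every $t \leq -T_s$, for some $T_s > 0$. The function $w$ is bounded and satisfies the linear parabolic equation $w_t = w_{xx} + c(t,x)w$ with the bounded coefficient $c(t,x) = \int_0^1 f_u\bigl(x,\theta\tilde{\varphi}_2(t+s,x) + (1-\theta)\varphi_1(t,x)\bigr)\,d\theta$. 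Hence the parabolic maximum principle on $[-T_s, \infty) \times \mathbb{T}$ gives $w \geq 0$ throughout, so $\tilde{\varphi}_2(t+s,x) \geq \varphi_1(t,x)$ for all $(t,x)$ and all $s > 0$. The symmetric argument (the roles of $\tilde{\varphi}_2$ and $\varphi_1$ are interchangeable since they share the same asymptotic prefactor) yields $\varphi_1(t+s,x) \geq \tilde{\varphi}_2(t,x)$ for all $s > 0$. Sending $s \downarrow 0$ and using the continuity of each solution in $t$ forces $\tilde{\varphi}_2 \equiv \varphi_1$.

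Unwinding the shift, $\varphi_1(0,x) = \varphi_2(\tau,x)$. Integrating over $\mathbb{T}$ and invoking the common normalization gives $\fint \varphi_2(\tau,x)\,dx = \tfrac{1}{2} = \fint \varphi_2(0,x)\,dx$. Integrating the PDE in $x$ yields $\tfrac{d}{dt}\fint \varphi_2(t,x)\,dx = \fint f(x,\varphi_2(t,x))\,dx$, which is strictly positive as long as $\varphi_2(t,\cdot) \not\equiv 0,1$; this follows from the asymptotics of Lemma \ref{p_exponential_global} together with the positivity of $f$ on $\mathbb{T} \times (0,1)$ in \eqref{e_f}. Thus $t \mapsto \fint \varphi_2(t,x)\,dx$ is strictly increasing and $\tau = 0$, yielding $\varphi_1 \equiv \varphi_2$. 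The main technical point I expect is justifying the comparison on the unbounded time interval $[-T_s,\infty) \times \mathbb{T}$ for the bounded function $w$, which is standard via the gauge $\tilde{w} = e^{-\|c\|_\infty t} w$; extracting uniform-in-$x$ control on the tail from Lemma \ref{p_exponential_global} is a matter of the parabolic Hölder bounds already invoked in its proof.
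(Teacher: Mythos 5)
Your proof is correct, and while it belongs to the same family as the paper's argument (a time-sliding comparison powered by the $t\to-\infty$ asymptotics of \cref{p_exponential_global} plus the strict monotonicity of $t\mapsto\fint\varphi\,dx$), the execution is genuinely different and, I think, cleaner. The paper slides $\varphi^2$ until a critical shift $h_0=\inf\{h:\varphi^1(t,x)<\varphi^2(t+h,x)\}$ and then rules out $h_0>0$ by a two-case contradiction: either the shifted solutions ``touch at $t=-\infty$'' (i.e.\ $\alpha_1=e^{f_0h_0}\alpha_2$), which forces $\eta(0,\cdot)=0$ via the $o(1)\psi_0e^{f_0t}$ super-solution and then contradicts the average normalization, or they are strictly separated at $-\infty$ and one can slide further, contradicting minimality. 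You instead pre-normalize the shift so the prefactors match exactly, observe that for every strictly positive additional shift $s$ the difference is bounded below near $t=-\infty$ by $\alpha_1\psi_0(x)e^{f_0t}(e^{f_0s}-1+o(1))>0$, apply the comparison principle forward from a finite time $-T_s$, and let $s\downarrow0$. This avoids the paper's delicate ``touching at infinity'' case entirely and replaces the extremal-shift argument with a limit over an open set of admissible shifts; the price is the same as the paper pays anyway, namely that the convergence in \cref{p_exponential_global} must be uniform in $x$ (which both you and the paper obtain from parabolic regularity), and that $f_0>0$ (established in \cref{p_exponential}) so that $e^{f_0s}>1$. Your closing step pinning down $\tau=0$ via strict positivity of $\fint f(x,\varphi_2)\,dx$ is the same monotonicity fact the paper uses to derive its contradiction in the first case.
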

\begin{proof}
Suppose that $\varphi^1$ and $\varphi^2$ are both solutions to \cref{e_logistic_global} with $\fint \phi^i(0,x) dx = 1/2$.  Using \cref{p_exponential_global}, we may choose $h$ large enough such that
\[
	\varphi^1(t,x) \leq \varphi^2(t+h,x),
\]
holds for all $t$ and all $x$.  Then we define
\begin{equation}\label{e_h_0}
	h_0 = \inf \{h : \varphi^1(t,x) < \varphi^2(t+h,x)\}.
\end{equation}
Since $\varphi^1$ and $\varphi^2$ have the same spatial average at $t = 0$, then $h_0 \geq 0$.  We wish to show that $h_0 = 0$.  To this end, let us assume that $h_0 > 0$.  There are two cases.  Using \cref{p_exponential}, we can find $\alpha_1$ and $\alpha_2$ such that $\varphi^1(t,x) \to \alpha_1 \psi_0$ as $t$ tends to $-\infty$  that $\varphi^2(t,x) \to \alpha_2 \psi_0$ as $t$ tends to $-\infty$ .  The first case is that $\alpha_1 = e^{f_0 h_0} \alpha_2$.  In other words, $\varphi^1$ and $\varphi^2$ touch at time $t=-\infty$.  Define $\eta(t,x) = \varphi^2(t+h_0,x) - \varphi^1(t,x)$ and $\eta$ satisfies
\[
	\eta_t = \eta_{xx} + \frac{f(x,\varphi^2(t+h_0,x)) - f(x,\varphi^1(t,x))}{\varphi^2(t+h_0,x) - \varphi^1(t+h,x)} \eta.
\]
Fix $T<0$ to be determined later.  Since $\phi^i \sim \alpha_i e^{f_0 T}$ and since $\alpha_1 = e^{f_0 h_0} \alpha_2$, we notice that $\eta(T,x) \leq o(1) e^{f_0 T}$ where $o(1)$ tends to zero as $T$ tends to $-\infty$.  From this it is easy to see that $o(1)\psi_0 e^{f_0 t}$ is a super-solution to equation for $\eta$ starting from any time $T$.  This implies that $\eta(0,x) \leq o(1)$.  Since this holds independent of our choice of $T$, we get that that $\eta(0,x) = 0$.  

On the other hand, $\varphi^2$ is bounded away from $0$ and $1$ on the time interval $[0,h_0]$.  Hence, there exists $\theta>0$ such that $f(x,\varphi^2) \geq \theta$ on the time interval $[0,h_0]$.  Integrating the equation for $\varphi^2$ in time and space, along with this fact and the fact that $\eta(0,x) = 0$, yields
\[\begin{split}
	0
		&= \fint \eta(0,x) dx
		= \fint \varphi^2(h_0,x) dx - \fint \varphi^1(0,x)dx
		= \fint \varphi^2(h_0,x) dx - m\\
		&= \fint \varphi^2(h_0,x)dx - \fint \varphi^2(0,x)dx
		= \int_0^{h_0} \fint f(x,\varphi^2(s,x)) dx ds
		\geq \int_0^{h_0} \theta ds = h_0 \theta > 0.
\end{split}\]
The second equality on the second line comes from integrating the equation for $\varphi^2$ in time and space.  The above inequality is clearly a contradiction.

Hence, it must be that, instead, $\alpha_1 < e^{f_0h_0} \alpha_2$.  Choose $\epsilon>0$ small enough that $\alpha_1 e^{\epsilon} < e^{h_0 - 2\epsilon} \alpha_2$.  Take $t_0$ to be a very large negative number such that if $t\leq t_0$ then
\[
	\varphi^1(t,x) \leq \alpha_1 e^\epsilon \psi_0(x) e^{f_0 t}
\]
and
\[
	\varphi^2(t + h_0) \geq \alpha_2 e^{f_0(t + h_0 - \epsilon)} \psi_0(x).
\]
Notice then that $\varphi^2(t+h_0 - \epsilon, x) > \varphi^1(t,x)$ for all $t < t_0$.  In addition, the comparison principle assures us that $\varphi^2(t+h_0 - \epsilon,x) > \varphi^1(t,x)$ for all $t \geq t_0$ as well.  This contradicts our choice of $h_0$ in \cref{e_h_0}.

Hence we may conclude that $h_0$ must be zero.  This implies that $\varphi^1(t,x) \leq \varphi^2(t,x)$ for all $t$ and all $x$.  The proof may be applied just as easily to show that $\varphi^2 \leq \varphi^1$ as well.  Hence we conclude that $\varphi^1 \equiv \varphi^2$, concluding the proof.
\end{proof}

\section*{Acknowledgements}
The author wishes to thank Jun Gao, Yu Gu, and Lenya Ryzhik for many helpful discussions.

\bibliography{fast_prop.bib}{}
\bibliographystyle{amsplain}

\end{document}